\documentclass{amsart}
\usepackage{amsmath, amssymb, amsthm}
\usepackage{enumerate}
\usepackage{graphicx}
\usepackage{hyperref}
\usepackage{pgfplots}
\pgfplotsset{compat=1.18}

\newtheorem{theorem}{Theorem}
\newtheorem{corollary}[theorem]{Corollary}
\newtheorem{proposition}[theorem]{Proposition}
\newtheorem{lemma}[theorem]{Lemma}
\theoremstyle{definition}
\newtheorem{example}[theorem]{Example}
\theoremstyle{remark}
\newtheorem{remark}[theorem]{Remark}

\title{The aspect ratio of the Twin Dragon is $1/\varphi$}

\author{Dmitry Mekhontsev}
\address{Novosibirsk State University, Novosibirsk, Russia}
\email{mekhontsev@gmail.com}
\urladdr{https://orcid.org/0009-0001-5065-7890}
\thanks{Preprint: \href{https://arxiv.org/abs/2604.05010}{arXiv:2604.05010}.
  Zenodo: \href{https://doi.org/10.5281/zenodo.19440753}{doi:10.5281/zenodo.19440753}.}

\subjclass[2020]{28A80, 37C45, 15A18, 11R11}
\keywords{Twin Dragon, iterated function system, self-similar measure,
  second moments, golden ratio, metallic ratio, aspect ratio,
  plane-filling tile}

\date{\today}

\begin{document}

\begin{abstract}
We show that the geometric aspect ratio of the Twin Dragon
--- defined via the Gaussian integer $1+i$ ---
equals $1/\varphi$, where $\varphi=(1+\sqrt{5})/2$ is the golden ratio,
and that its major axis makes angle $-\arctan\varphi$ with the horizontal.
More generally, for every equal-contraction IFS
$\{f_k(z)=a(s_k z+t_k)\}$ with rotations $s_k\in S^1$
(or reflections) and centred translations,
we give closed-form formulas for the aspect ratio, ellipticity, and
principal-axis angle of the attractor.
When all rotations are trivial, the aspect ratio depends only on~$a$
and a single anisotropy parameter $\kappa\in[0,1]$;
for $a=1/\lambda$ with $\lambda$ in an imaginary quadratic ring,
its square lies in the quadratic field $\mathbb{Q}(\sqrt{d}\,)$
where $d$ is the square-free part of $\mathbf{N}(\lambda^2\!-\!1)$.
Since $1/\varphi$ is the reciprocal of the first metallic mean, it is natural to ask
which others arise: every metallic ratio
$\mu_m=(m+\sqrt{m^2+4})/2$ $(m\geq 1)$
arises as the reciprocal aspect ratio of a plane-filling tile
over~$\mathbb{Z}[i]$;
moreover, $\mathbb{Z}[i]$ is the unique imaginary quadratic ring
where collinear digits can produce metallic-ratio aspect ratios.
\end{abstract}

\maketitle

\section{Introduction}

The Twin Dragon is a classical plane-filling fractal \cite{DavisKnuth1970} governed
by the Gaussian integer $1+i$: its two defining contractions share the linear part
$(1-i)/2$, and their translations are symmetric about the origin.
Despite this simple, purely arithmetic construction, the attractor has a non-trivial
shape that is not immediately obvious from the definition.
Visually the Twin Dragon looks somewhat elongated --- neither a square nor a thin
sliver --- which raises the question of its precise aspect ratio.

A natural quantitative measure of shape for a planar set is the \emph{geometric aspect
ratio}: the ratio of standard deviations along the principal axes of its area
distribution, or equivalently the square root of the ratio of the eigenvalues of the
second-moment (covariance) matrix $M$ of the uniform measure on the attractor.
This definition gives the expected answers for elementary shapes:
an ellipse with semi-axes $\alpha \geq \beta > 0$ has aspect ratio $\beta/\alpha$
(eigenvalues $\alpha^2/4$ and $\beta^2/4$, so the definition directly recovers the
ratio of the semi-axes); a $p \times q$ rectangle has aspect ratio $p/q$
(eigenvalues $p^2/12$ and $q^2/12$); and a square has aspect ratio~$1$.
The covariance matrix is the unique second-order shape descriptor of a
probability distribution: any similarity-invariant quantity derived
from second moments alone is a function of the eigenvalue ratio of~$M$.
Other notions of aspect ratio --- the bounding-box ratio, the Feret
diameter ratio, the eccentricity of the convex hull --- require
knowledge of the boundary or its extreme points,
which for general IFS is not available in closed form.
The covariance definition, by contrast, is determined entirely by the
invariant measure and is exactly computable for self-similar IFS via
the moment fixed-point equation.

The approach of characterising IFS attractors through moments of their invariant
measures was introduced by Vrscay and Roehrig \cite{VrscayRoehrig1989} and
systematically developed in \cite{Vrscay1991}.
Further analysis of self-similar measures via moments and Fourier transforms
appears in Strichartz~\cite{Strichartz1993} and Lau--Ngai~\cite{LauNgai1999};
however, explicit aspect-ratio formulas were not derived in those works.
The key observation is that the
self-similar measure satisfies a linear fixed-point equation in its moments, which
can be solved exactly.  When the open set condition holds, Hutchinson's theorem
\cite{Hutchinson1981} guarantees that the self-similar measure coincides with the
normalised Hausdorff measure $\mathcal{H}^s|_A$, so the covariance matrix captures
the actual geometry of the attractor at whatever dimension $s$ it carries.  In the
special case where the similarity dimension $s$ equals the ambient dimension,
$\mathcal{H}^s$ reduces to Lebesgue area.

We apply this method to the Twin Dragon.  The covariance matrix turns out to be
$M = \tfrac{1}{5}\begin{pmatrix}2&-1\\-1&3\end{pmatrix}$,
whose eigenvalues involve $\sqrt{5}$, leading to the aspect ratio $1/\varphi$,
where $\varphi = (1+\sqrt{5})/2$ is the golden ratio.  The appearance of $\varphi$
in a fractal with no pentagonal geometry is unexpected.
Although the method of moments for IFS goes back to \cite{VrscayRoehrig1989},
the closed-form aspect ratio and orientation formulas
(Theorem~\ref{thm:family} below) and the
characterisation of the quadratic field containing its square for Gaussian-integer parameters
(Corollary~\ref{cor:field}) are, to our knowledge, new.

\textit{Discovery.}
The result was first found empirically using the IFStile software
\cite{Mekhontsev2019}, which implements the moment invariants of
\cite{VrscayRoehrig1989,Vrscay1991} as part of its algebraic IFS search engine.
Among the similarity invariants reported by IFStile for the Twin Dragon, the
aspect ratio $1/\varphi$ appeared as a numerical value; the present paper supplies
the exact proof and its generalisation to arbitrary
equal-contraction IFS with centred translations.
This is an instance of \emph{experimental mathematics} \cite{BorweinBailey2004}:
a computer experiment suggests a precise conjecture, which is then confirmed
by a rigorous proof.

The paper is organised as follows.
Section~\ref{sec:family} derives closed-form formulas for the
aspect ratio and principal-axis angle
(Theorem~\ref{thm:family}) and classifies the quadratic field
of the squared aspect ratio (Corollary~\ref{cor:field}).
Section~\ref{sec:family-N} applies the formula to a two-parameter family of
plane-filling tiles parameterised by $x^2-\ell x+N$
(Proposition~\ref{prop:family-N}).
Section~\ref{sec:metallic} specialises to tiles over~$\mathbb{Z}[i]$,
proves that every metallic ratio $\mu_m$ arises as
the reciprocal aspect ratio of such a tile
(Theorem~\ref{thm:metallic}), and shows that $\mathbb{Z}[i]$ is the unique imaginary
quadratic ring where collinear tiles achieve metallic-ratio aspect ratios
(Corollary~\ref{cor:unique-Zi}).
The eponymous Twin Dragon is treated as Example~\ref{ex:twin-dragon}
immediately after Corollary~\ref{cor:field}.

\section{The aspect ratio formula}\label{sec:family}

An \emph{iterated function system} (IFS) is a finite collection
of contractions $f_1,\ldots,f_N$ on a complete metric space.
By Hutchinson's theorem \cite{Hutchinson1981} there exists a unique
non-empty compact set~$A$, the \emph{attractor}, satisfying
$A = f_1(A) \cup \cdots \cup f_N(A)$.
For background see \cite{Barnsley1993, Falconer2014}; for self-affine digit tiles
in particular, see \cite{Bandt1991, Kenyon1992, LagariasWang1996}.

Consider an equal-weight IFS of $N\geq 2$ contractions on~$\mathbb{C}$,
each of the form
\[
  f_k(z) = a(s_k z + t_k)
  \qquad\text{or}\qquad
  f_k(z) = a(s_k\overline{z} + t_k),
\]
where $a \in \mathbb{C}$, $|a| < 1$, the rotations satisfy
$s_k \in S^1$, and $t_1,\ldots,t_N \in \mathbb{C}$ satisfy
$\sum_{k=1}^N t_k = 0$.

\begin{remark}[Centering]\label{rem:centering}
The condition $\sum t_k = 0$ is no loss of generality.
The translation $z \mapsto z - c$ replaces each~$t_k$ by
$\tilde{t}_k = s_k c + t_k - c/a$ (preserving maps) or
$\tilde{t}_k = s_k \bar{c} + t_k - c/a$ (reversing maps).
Setting $\sum \tilde{t}_k = 0$ gives a linear equation in~$c$
(and~$\bar{c}$ when reversing maps are present)
whose unique solvability is guaranteed by $|a| < 1$.
When all~$s_k$ coincide, this simplifies to
$t_k \mapsto t_k - \bar{t}$, a uniform shift of the digits.
\end{remark}

Define the \emph{digit variance} $\sigma^2 := \tfrac{1}{N}\sum_k |t_k|^2 > 0$,
the \emph{algebraic variance} $\tau := \tfrac{1}{N}\sum_k t_k^2 \in \mathbb{C}$,
and the \emph{digit anisotropy}
\[
  \kappa := \frac{|\tau|}{\sigma^2}
         = \frac{|\sum t_k^2|}{\sum |t_k|^2} \in [0,1].
\]
The bound $\kappa \leq 1$ follows from the triangle inequality
$|\sum t_k^2| \leq \sum |t_k^2| = \sum |t_k|^2$.
When all $t_k$ are collinear ($t_k = e^{i\theta}r_k$ with $r_k \in \mathbb{R}$),
one has $\kappa = 1$; vertices of a regular $N$-gon ($N\geq 3$) give $\kappa = 0$.
The Twin Dragon is the case $N=2$, $a = (1-i)/2$, $s_k = 1$, $t_k = \pm 1$
($\sigma^2=1$, $\kappa=1$), treated in Example~\ref{ex:twin-dragon} below.

\begin{theorem}\label{thm:family}
With the notation above, let $I$ and~$J$ denote the sets of
orientation-preserving and -reversing indices, and define
\[
  u = \frac{1}{N}\!\sum_{k \in I}s_k^2,\qquad
  v = \frac{1}{N}\!\sum_{k \in J}s_k^2.
\]
Then the second moment $\omega := E_\mu[Z^2]$ is
\begin{equation}\label{eq:omega}
  \omega
  = \frac{a^2\bigl(\tau + \overline{a^2}(v\,\overline{\tau}
          - \bar{u}\,\tau)\bigr)}
         {|1-a^2 u|^2 - |a|^4|v|^2},
\end{equation}
the aspect ratio is
$\mathrm{AR}^2 = (1-\rho)/(1+\rho)$, where
\begin{equation}\label{eq:rho}
  \rho = \frac{(1-|a|^2)\,|\omega|}{|a|^2\,\sigma^2},
\end{equation}
and the major axis of the attractor makes angle
\begin{equation}\label{eq:psi}
  \psi = \tfrac{1}{2}\arg\omega
\end{equation}
with the positive $x$-axis.
When $v = 0$ \textup{(}all maps orientation-preserving\textup{)},
\eqref{eq:omega} reduces to
$\omega = a^2\tau/(1-a^2 u)$, and
\begin{equation}\label{eq:rho-pres}
  \rho = \frac{(1-|a|^2)\,|\tau|}{|1-a^2 u|\,\sigma^2}.
\end{equation}
In general, the aspect ratio depends on $u$, $v$, and the
complex value of~$\tau$ \textup{(}not just~$|\tau|$\textup{)}.
\end{theorem}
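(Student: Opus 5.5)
Let $\mu$ be the equal-weight self-similar measure, so that $E_\mu[g(Z)]=\frac1N\sum_k E_\mu[g(f_k(Z))]$ for every continuous $g$. The plan is to compute the complex first and second moments of $\mu$, together with the mean modulus squared, from this fixed-point identity, and then to read off the covariance ellipse.

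\emph{First moment.} Put $m=E_\mu[Z]$. Applying the identity to $g(z)=z$ gives
\[
m=\frac aN\Bigl(\sum_{I}s_k m+\sum_{J}s_k\bar m+\sum_k t_k\Bigr).
\]
Since $\sum t_k=0$, the affine terms drop out. Because $|a|<1$ and $|s_k|=1$, the real-linear map $m\mapsto a(\alpha m+\beta\bar m)$ with $|\alpha|+|\beta|\le1$ has operator norm below one. Hence $m=0$, which is the uniqueness underlying Remark~\ref{rem:centering}. It follows that $\omega=E[Z^2]$ and $\Sigma:=E[|Z|^2]$ are already central moments.

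\emph{Second moments.} Take $g(z)=z^2$. For $k\in I$ we have $f_k(z)^2=a^2(s_k^2z^2+2s_kt_kz+t_k^2)$; for $k\in J$, $z$ is replaced by $\bar z$. The cross terms vanish because $m=0$. This leaves
\[
\omega=a^2(u\,\omega+v\,\bar\omega+\tau).
\]
Conjugating gives a second real-linear equation, and eliminating $\bar\omega$ yields $\omega(1-a^2u)-a^2v\,\overline{a^2}\,\overline{(\bar u\omega+\ldots)}$. More precisely, substituting $\bar\omega=\overline{a^2}(\bar u\bar\omega+\bar v\omega+\bar\tau)$, solving for $\bar\omega$, and inserting the result back produces~\eqref{eq:omega}. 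The denominator $|1-a^2u|^2-|a|^4|v|^2$ is positive because $|u|+|v|\le1$ and $|a|<1$. Next take $g(z)=|z|^2$. Since $|s_k|=1$, we get $\Sigma=|a|^2(\Sigma+\sigma^2)$, so $\Sigma=|a|^2\sigma^2/(1-|a|^2)$, independently of the rotations. When $v=0$ the elimination is trivial and gives $\omega=a^2\tau/(1-a^2u)$, which yields~\eqref{eq:rho-pres}.

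\emph{Geometry.} Write $Z=X+iY$, with covariance matrix $M$ having entries $E[X^2],E[XY],E[Y^2]$. Then $\Sigma=\operatorname{tr}M$ and $\omega=E[X^2]-E[Y^2]+2iE[XY]$. The eigenvalues of $M$ are therefore $\tfrac12(\Sigma\pm|\omega|)$, because $(E[X^2]-E[Y^2])^2+4E[XY]^2=|\omega|^2$. This gives $\mathrm{AR}^2=(1-\rho)/(1+\rho)$ with $\rho=|\omega|/\Sigma$, which is~\eqref{eq:rho}. For the orientation, rotate by $e^{-i\psi}$ with $\psi=\tfrac12\arg\omega$. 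The rotated variable $W$ then has $E[W^2]=|\omega|\ge0$ real, so the cross moment vanishes and $E[\operatorname{Re}W^2]\ge E[\operatorname{Im}W^2]$. Thus the major axis lies at angle $\psi$. When $\omega=0$ the measure is isotropic and the angle is immaterial.

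The main obstacle is the conjugate-linear elimination in the second-moment equation. The algebra there is routine but error-prone, and the positivity of the denominator has to be checked. The dependence on $\arg\tau$, which is the final claim, is then visible from the term $v\bar\tau-\bar u\tau$. Its modulus changes with the phase of $\tau$ relative to $u$ and $v$; a simple example with $v\neq0$ confirms this.
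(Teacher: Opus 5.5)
Your proposal is correct and follows the paper's proof essentially step for step. You apply the fixed-point identity to $z$, $|z|^2$ and $z^2$; you eliminate $\bar\omega$ from $\omega=a^2(u\omega+v\bar\omega+\tau)$, which is equivalent to the paper's Cramer's rule, with the same positivity bound $|a|^2(|u|+|v|)<1$ for the denominator; you obtain the eigenvalues $\tfrac12(E[|Z|^2]\pm|\omega|)$ of the covariance matrix; and your rotation by $e^{-i\psi}$ plays the role of the decomposition in Remark~\ref{rem:orientation}. Only cosmetic fixes are needed: drop the garbled intermediate elimination expression, and write $E[(\operatorname{Re}W)^2]\ge E[(\operatorname{Im}W)^2]$ instead of $E[\operatorname{Re}W^2]$.
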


\begin{proof}
Let $Z$ be distributed according to the invariant measure $\mu$.
The fixed-point equation
\begin{equation}\label{eq:fp}
  E[h(Z)] = \frac{1}{N}\!\Bigl(
    \sum_{k\in I} E\bigl[h\bigl(a(s_k Z + t_k)\bigr)\bigr]
    + \sum_{k\in J} E\bigl[h\bigl(a(s_k\overline{Z} + t_k)\bigr)\bigr]
  \Bigr)
\end{equation}
applied to $h(z)=z$ gives
$E[Z] = a\bigl(u'\,E[Z] + v'\,\overline{E[Z]}
  + \tfrac{1}{N}\!\sum t_k\bigr)$
where $u' = \frac{1}{N}\sum_{k\in I}s_k$,
$v' = \frac{1}{N}\sum_{k\in J}s_k$;
since $|a|(|u'|+|v'|)\leq|a|<1$ and $\sum t_k = 0$,
the unique solution is $E[Z]=0$.

Applying \eqref{eq:fp} to $h(z)=|z|^2$:
since $E[Z]=0$, the cross terms $2\operatorname{Re}(\overline{s_k Z}\,t_k)$
vanish in expectation, and $|s_k z|^2 = |z|^2$, giving
\[
  E[|Z|^2] = |a|^2\bigl(E[|Z|^2] + \sigma^2\bigr),
\]
so $E[|Z|^2] = |a|^2\sigma^2/(1-|a|^2)$.

Applying \eqref{eq:fp} to $h(z)=z^2$ and writing $\omega := E[Z^2]$:
$(s_k z)^2 = s_k^2 z^2$ for preserving maps and
$s_k^2\overline{z}^2$ for reversing ones, while the cross
terms $2s_k z\cdot t_k$ vanish in expectation
and the translations contribute $\sum t_k^2 = N\tau$, so
\begin{equation}\label{eq:omega-fp}
  \omega = a^2\bigl(u\,\omega + v\,\overline{\omega} + \tau\bigr).
\end{equation}
Rewriting as the $\mathbb{R}$-linear system
$(1-a^2 u)\,\omega - a^2 v\,\overline{\omega} = a^2\tau$
and applying Cramer's rule
($\Delta := |1-a^2 u|^2 - |a|^4|v|^2 > 0$
since $|a|^2(|u|+|v|)\leq |a|^2 < 1$)
gives~\eqref{eq:omega}.
The ellipticity
$\rho = |\omega|/E[|Z|^2]$ gives~\eqref{eq:rho};
since the covariance matrix $M$ of the real 2D vector
$(\mathrm{Re}\,Z,\mathrm{Im}\,Z)$ is positive semidefinite,
$|\omega|\leq E[|Z|^2]$ and $\rho\in[0,1]$, so
$\mathrm{AR}^2 = (1-\rho)/(1+\rho)\in[0,1]$.
The orientation~\eqref{eq:psi} follows from the
covariance decomposition in Remark~\ref{rem:orientation}.

When $v=0$, the equation is $\mathbb{C}$-linear:
$\omega = a^2\tau/(1-a^2 u)$,
giving~\eqref{eq:rho-pres}.
\end{proof}

When all rotations are trivial, the formulas simplify to depend
on a single anisotropy parameter.

\begin{corollary}\label{cor:kappa}
When all $s_k = 1$ \textup{(}so $u=1$, $v=0$\textup{)},
the aspect ratio depends on the translations only through
$\kappa = |\tau|/\sigma^2 \in [0,1]$:
\begin{equation}\label{eq:family}
  \mathrm{AR}^2 = \frac{|1-a^2| - (1-|a|^2)\,\kappa}{|1-a^2| + (1-|a|^2)\,\kappa}.
\end{equation}
\begin{enumerate}[\upshape(a)]
\item For fixed~$a$, $\mathrm{AR}$ is strictly decreasing in~$\kappa$:
$\kappa=0$ \textup{(}isotropic digit set\textup{)} gives $\mathrm{AR}=1$,
and $\kappa=1$ \textup{(}collinear translations\textup{)} gives the
most elongated attractor.
\item $\mathrm{AR}=0$ if and only if $a \in \mathbb{R}$ and $\kappa=1$.
\end{enumerate}
For collinear translations \textup{(}$\kappa=1$,
$\tau>0$ after aligning the digit axis with~$\mathbb{R}$\textup{)},
\begin{equation}\label{eq:psi-simple}
  \tan 2\psi = \frac{\operatorname{Im}(a^2)}{\operatorname{Re}(a^2)-|a|^4}\,;
\end{equation}
for non-collinear digits, $\psi$ is shifted
by~$\tfrac{1}{2}\arg\tau$.
\end{corollary}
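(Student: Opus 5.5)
This is a direct specialisation of Theorem~\ref{thm:family}, so I plan to substitute and simplify. With all $s_k=1$ we have $J=\emptyset$, hence $v=0$ and $u=\tfrac1N\sum_k 1 = 1$. Formula \eqref{eq:rho-pres} then gives
\[
  \rho = \frac{(1-|a|^2)\,|\tau|}{|1-a^2|\,\sigma^2} = \frac{(1-|a|^2)\,\kappa}{|1-a^2|}.
\]
Note that $|1-a^2|\ge 1-|a|^2>0$, so $\rho$ is well defined. Substituting into $\mathrm{AR}^2=(1-\rho)/(1+\rho)$ and multiplying numerator and denominator by $|1-a^2|$ yields \eqref{eq:family}. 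The point is that $\rho$, and hence $\mathrm{AR}$, sees the translations only through $\kappa$.

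For (a), the map $\rho\mapsto(1-\rho)/(1+\rho)$ is strictly decreasing on $[0,1]$. Moreover, $\rho$ is a strictly increasing linear function of $\kappa$, because its coefficient $(1-|a|^2)/|1-a^2|$ is positive. So $\mathrm{AR}$ is strictly decreasing in $\kappa$. At $\kappa=0$ we get $\rho=0$ and $\mathrm{AR}=1$. At the other end, $\kappa=1$ is attained by collinear translations, as noted before the theorem, and this gives the minimum.

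For (b), $\mathrm{AR}=0$ holds iff $\rho=1$, that is, iff $(1-|a|^2)\kappa=|1-a^2|$. Since $\kappa\le1$ and $|1-a^2|\ge 1-|a|^2$, equality forces both $\kappa=1$ and $|1-a^2|=1-|a|^2$. The second condition is the equality case of the triangle inequality $|1-a^2|\ge 1-|a^2|$. It holds iff $a^2$ is a nonnegative real multiple of $1$, i.e.\ $a^2\in[0,1)$, which means $a\in\mathbb{R}$ (for $a\neq0$). This is the one step requiring care. I also need to check the excluded option $a^2<0$, i.e.\ $a$ purely imaginary: there $|1-a^2|=1+|a|^2>1-|a|^2$ whenever $a\ne0$, so it gives no equality. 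The converse direction is immediate.

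For the angle, I use \eqref{eq:psi}. With $u=1$, $v=0$ we have $\omega=a^2\tau/(1-a^2)$. When $\tau>0$, this gives $\arg\omega=\arg\bigl(a^2/(1-a^2)\bigr)=\arg\bigl(a^2(1-\overline{a^2})\bigr)=\arg\bigl(a^2-|a|^4\bigr)$. Taking the ratio of imaginary to real parts gives \eqref{eq:psi-simple}, with the usual quadrant caveat for $\tan$. In general, $\arg\omega$ gains the additive term $\arg\tau$, so $\psi$ shifts by $\tfrac12\arg\tau$.
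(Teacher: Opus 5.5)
Your proof is correct and follows the paper's route: you specialise \eqref{eq:rho-pres} to $u=1$, $v=0$ to get $\rho=(1-|a|^2)\kappa/|1-a^2|$, then read off monotonicity in $\kappa$ and the equality case $|1-a^2|=1-|a|^2\iff a\in\mathbb{R}$. You also derive \eqref{eq:psi-simple} via $\arg\omega=\arg(a^2-|a|^4)$, which the paper's proof leaves implicit in \eqref{eq:psi}; the only nit is that $s_k=1$ alone does not force $J=\emptyset$ (a reversing map $a(\overline{z}+t_k)$ also has $s_k=1$), but the statement's ``so $u=1$, $v=0$'' makes that assumption explicit.
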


\begin{proof}
Setting $s_k=1$ gives $u=1$, $v=0$, and~\eqref{eq:rho-pres}
yields $\rho = (1-|a|^2)\kappa/|1-a^2|$,
giving~\eqref{eq:family}.
Properties~(a)--(b) follow:
$\rho$ is increasing in~$\kappa$,
vanishes at $\kappa=0$,
and $\rho=1$ (i.e.\ $\mathrm{AR}=0$) requires
$\kappa=1$ and $|1-a^2|=1-|a|^2$,
which holds iff $a \in \mathbb{R}$.
\end{proof}

The aspect ratio in~\eqref{eq:family} involves $|1-a^2|$, which is
generically transcendental.  When $a = 1/\lambda$ for an algebraic
integer~$\lambda$ in an imaginary quadratic field
$K = \mathbb{Q}(\sqrt{-D})$, this modulus reduces to
$\sqrt{\mathbf{N}(\lambda^2-1)}/\mathbf{N}(\lambda)$
(where $\mathbf{N}$ is the field norm,
$\mathbf{N}(\alpha)=|\alpha|^2\in\mathbb{Z}$ for
$\alpha\in\mathcal{O}_K$),
and the aspect ratio becomes a quadratic irrational.

\begin{corollary}\label{cor:field}
Let $K = \mathbb{Q}(\sqrt{-D})$ be an imaginary quadratic field
with ring of integers~$\mathcal{O}_K$,
let $\lambda \in \mathcal{O}_K$ with $|\lambda|>1$ and $\lambda\notin\mathbb{R}$,
and set $a = 1/\lambda$.  For any IFS as in Corollary~\textup{\ref{cor:kappa}}
with $\kappa=1$,
the squared aspect ratio~\eqref{eq:family} lies
in $\mathbb{Q}(\sqrt{d}\,)$,
where $d$ is the square-free part of $\mathbf{N}(\lambda^2-1)$.
Explicitly,
\begin{equation}\label{eq:field}
  \mathrm{AR}^2
  = \frac{\sqrt{\mathbf{N}(\lambda^2-1)} - (\mathbf{N}(\lambda)-1)}
         {\sqrt{\mathbf{N}(\lambda^2-1)} + (\mathbf{N}(\lambda)-1)}.
\end{equation}
\end{corollary}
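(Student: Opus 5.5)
The plan is to substitute $a=1/\lambda$ into formula~\eqref{eq:family} of Corollary~\ref{cor:kappa} with $\kappa=1$. Then we clear denominators and use that field norms of elements of $\mathcal{O}_K$ are the squared complex moduli, which are rational integers. This gives~\eqref{eq:field}. Membership in $\mathbb{Q}(\sqrt{d}\,)$ is then immediate by rationalising.

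First, with $\kappa=1$, \eqref{eq:family} reads
\[
\mathrm{AR}^2=\frac{|1-a^2|-(1-|a|^2)}{|1-a^2|+(1-|a|^2)}.
\]
Put $a=1/\lambda$. Then
\[
|1-a^2|=\frac{|\lambda^2-1|}{|\lambda|^2},\qquad 1-|a|^2=\frac{|\lambda|^2-1}{|\lambda|^2}.
\]
Multiplying numerator and denominator by $|\lambda|^2=\mathbf{N}(\lambda)$ removes the common factor. Since $\lambda^2-1\in\mathcal{O}_K$ and $K$ is imaginary quadratic, complex conjugation is the nontrivial automorphism. Hence $|\lambda^2-1|^2=\mathbf{N}(\lambda^2-1)\in\mathbb{Z}_{\ge0}$, so $|\lambda^2-1|=\sqrt{\mathbf{N}(\lambda^2-1)}$, and likewise $|\lambda|^2=\mathbf{N}(\lambda)$. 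This yields exactly~\eqref{eq:field}.

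Next we check that the expression is well defined and nondegenerate. The denominator is positive because $\mathbf{N}(\lambda)-1>0$, using $|\lambda|>1$. We also need $\mathbf{N}(\lambda^2-1)\neq 0$. Since $\lambda\notin\mathbb{R}$, we have $\lambda\neq\pm1$, so $\lambda^2\neq1$. (Equivalently, $a\notin\mathbb{R}$, which by Corollary~\ref{cor:kappa}(b) gives $\mathrm{AR}>0$, though this is not needed for the field statement.)

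Finally we identify the field. Write $\mathbf{N}(\lambda^2-1)=f^2d$ with $d$ square-free and $f\in\mathbb{Z}_{>0}$, so $\sqrt{\mathbf{N}(\lambda^2-1)}=f\sqrt d$. Set $n=\mathbf{N}(\lambda)-1\in\mathbb{Z}$. Then
\[
\mathrm{AR}^2=\frac{f\sqrt d-n}{f\sqrt d+n}=\frac{(f\sqrt d-n)^2}{f^2d-n^2}.
\]
The denominator $f^2d-n^2$ is nonzero: if $d=1$, the original denominator $f+n$ is positive, so the unrationalised expression is already rational; otherwise $\sqrt d$ is irrational and $f^2d\neq n^2$. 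In both cases the value lies in $\mathbb{Q}(\sqrt d\,)$. When $d=1$ this field is just $\mathbb{Q}$, and the statement still holds trivially.

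None of these steps is a real obstacle; the only points needing care are justifying $|\alpha|^2=\mathbf{N}(\alpha)$ for $\alpha\in\mathcal{O}_K$ and handling the degenerate case $d=1$ in the rationalisation.
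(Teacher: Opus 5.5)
Your proposal is correct and follows the paper's proof: substitute $|a|^2=1/\mathbf{N}(\lambda)$, $|1-a^2|=\sqrt{\mathbf{N}(\lambda^2-1)}/\mathbf{N}(\lambda)$ and $\kappa=1$ into \eqref{eq:family}. Your extra checks (positive denominator, $\lambda^2\neq 1$, the case $d=1$) make explicit what the paper leaves implicit. The rationalisation step is not needed, though: $\mathbb{Q}(\sqrt d\,)$ is a field containing $f\sqrt d$ and $n$, and the denominator $f\sqrt d+n$ is nonzero, so the quotient already lies in it.
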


\begin{proof}
For $\lambda\in\mathcal{O}_K$,
$\mathbf{N}(\lambda)=|\lambda|^2\in\mathbb{Z}$
and $\mathbf{N}(\lambda^2-1)=|\lambda^2-1|^2\in\mathbb{Z}$.
Substitute $|a|^2 = 1/\mathbf{N}(\lambda)$,
$|1-a^2| = \sqrt{\mathbf{N}(\lambda^2-1)}/\mathbf{N}(\lambda)$,
and $\kappa=1$
into~\eqref{eq:family}.
\end{proof}

\begin{example}[Twin Dragon]\label{ex:twin-dragon}
The Twin Dragon $A\subset\mathbb{C}$ is the attractor of the IFS
\cite{DavisKnuth1970}
\[
  A = f_1(A)\cup f_2(A),\qquad
  f_{1,2}(z) = \tfrac{1-i}{2}(z\pm 1),
\]
governed by the Gaussian integer $\lambda=1+i$
\textup{(}so $a=1/\lambda=(1-i)/2$, $N=2$, $\ell=2$,
$\sigma^2=1$, $\kappa=1$\textup{)}.
This is the case $\lambda=1+i$ of Corollary~\ref{cor:field},
with $\lambda^2-1 = -1+2i$, $\mathbf{N}(\lambda^2-1)=5$, $d=5$:
substituting in~\eqref{eq:field} gives
\[
  \mathrm{AR}^2
  = \frac{\sqrt{5}-1}{\sqrt{5}+1}
  = \frac{3-\sqrt{5}}{2}
  = \frac{1}{\varphi^2},
  \qquad
  \mathrm{AR} = \frac{1}{\varphi}.
\]
The open set condition holds \cite[Corollary~3]{Bandt1991}, so the
self-similar measure coincides with $\mathcal{H}^2|_A$
\cite[\S5]{Hutchinson1981}; since the similarity dimension equals the
ambient dimension, the covariance matrix $M$ describes the actual
geometric shape of the tile \textup{(}Figure~\ref{fig:ellipses}a\textup{)}.
Explicitly, $E[Z^2]=a^2/(1-a^2)=(-1-2i)/5$,
$E[|Z|^2]=|a|^2/(1-|a|^2)=1$, whence
\[
  M = \frac{1}{5}\begin{pmatrix}2 & -1 \\ -1 & 3\end{pmatrix},
  \qquad
  I_{1,2} = \tfrac{1}{2}\bigl(1\mp\tfrac{1}{\sqrt{5}}\bigr),
\]
confirming $\sqrt{I_1/I_2}=1/\varphi$.
The principal-axis angle follows from
Proposition~\ref{prop:family-N}\textup{(}iii\textup{)}:
$\tan\psi=-\varphi$, i.e.\ $\psi=-\arctan\varphi\approx-58.3^\circ$
along the direction $(1,-\varphi)$.
\end{example}

\begin{remark}
The appearance of $\varphi$ is unexpected: no pentagon, decagon, or
Fibonacci recurrence enters the Twin Dragon's definition.
The origin of~$5$ --- and hence $\varphi$ --- is a single Gaussian
prime.  Since $1-a^2=(2+i)/2$ and $\kappa=1$, the ellipticity is
$\rho = (1-|a|^2)/|1-a^2| = 1/\sqrt{5}$, where $5=\mathbf{N}(2+i)$
is the norm of the Gaussian prime $2+i$;
thus $\varphi$ enters via the factorisation $5=(2+i)(2-i)$ in
$\mathbb{Z}[i]$, and $\mathrm{AR}^2=(1-\rho)/(1+\rho)=1/\varphi^2$
uses $\varphi^2=\varphi+1$.
\end{remark}

\begin{table}[ht]
\centering
\small
\caption{Aspect ratios for small Gaussian integers $\lambda\in\mathbb{Z}[i]$,
illustrating Corollary~\ref{cor:field}.
The quadratic field containing $\mathrm{AR}^2$ is determined by the
square-free part of $\mathbf{N}(\lambda^2-1)$.}
\label{tab:examples}
\begin{tabular}{ccccl}
$\lambda$ & $\lambda^2-1$ & $\mathbf{N}(\lambda^2-1)$ & $d$ & $\mathrm{AR}$ \\[2pt]
\hline\\[-8pt]
$1+i$  & $-1+2i$  & $5$  & $5$ & $1/\varphi = (\sqrt{5}-1)/2 \approx 0.618$ \\
$2+i$  & $2+4i$   & $20$ & $5$ & $1/\varphi^3 = \sqrt{5}-2 \approx 0.236$ \\
$1+2i$ & $-4+4i$  & $32$ & $2$ & $\sqrt{2}-1 \approx 0.414$ \\
\end{tabular}
\end{table}

\noindent
The first two rows of Table~\ref{tab:examples} share the same quadratic
field $\mathbb{Q}(\sqrt{5})$
because the rational prime~$5$ divides both norms.
This is not a coincidence: the Cayley-type transform~\eqref{eq:field}
maps $\sqrt{\mathbf{N}(\lambda^2-1)}$ to an element of the
real quadratic field $\mathbb{Q}(\sqrt{d}\,)$, and the particular
metallic ratio that appears is controlled by the arithmetic of this field.
In the first row, $\sqrt{5}$ yields a rational expression in $\varphi$
thanks to the identity $\varphi^2 = \varphi+1$: the golden ratio is the
fundamental unit of the ring $\mathbb{Z}[\varphi]$, so every element of
$\mathbb{Q}(\sqrt{5})$ in $(0,1)$ of the form
$(\sqrt{5}-n)/(\sqrt{5}+n)$ is a power of $1/\varphi$.
The third row gives the \emph{silver ratio} $\delta_S = \sqrt{2}-1$
(fundamental unit of $\mathbb{Z}[\sqrt{2}]$).

All examples in Table~\ref{tab:examples} have $s_k=1$.
When rotations are present, the general formula~\eqref{eq:rho-pres}
applies, and the aspect ratio can change dramatically.

\begin{example}[L\'evy C curve]\label{ex:levy}
Take the same contraction $a = (1-i)/2$ and centred digits
$t_0 = -1$, $t_1 = 1$ as in the Twin Dragon,
but set~$s_0 = i$, $s_1 = 1$
(so $f_0$ includes an extra $90^\circ$ rotation).
The attractor is a copy of the \emph{L\'evy C curve}.
Since $\sigma^2 = 1$, $\tau = 1$, $\kappa = 1$,
the digit geometry is identical to the Twin Dragon;
only the rotation average changes:
$u = (i^2 + 1^2)/2 = 0$.
By~\eqref{eq:rho-pres},
\[
  \rho
  = \frac{(1-|a|^2)\,|\tau|}{|1-a^2 u|\,\sigma^2}
  = \frac{\tfrac{1}{2}\cdot 1}{|1-0|\cdot 1}
  = \frac{1}{2},
\]
whence $\mathrm{AR}^2 = (1-\tfrac{1}{2})/(1+\tfrac{1}{2}) = 1/3$,
i.e.\ $\mathrm{AR} = 1/\sqrt{3} \approx 0.577$.
For comparison, the Twin Dragon has $u=1$ and
$\mathrm{AR} = 1/\varphi \approx 0.618$
(Table~\ref{tab:examples}).
Thus a single rotation $s_0 \mapsto i$ replaces $u=1$ with $u=0$,
changing the aspect ratio from $1/\varphi$ to $1/\sqrt{3}$.
More generally, for $N=2$ one has $u = 0$ if and only if
$s_1/s_0 = \pm i$
(the two rotations differ by~$90^\circ$).
The \emph{Heighway Dragon}, with $s_0 = -1$ and $s_1 = i$,
is another instance: its attractor has a very different shape
but the same $\mathrm{AR}^2 = 1/3$.
\end{example}

\begin{example}[Conjugate parallelogram]\label{ex:refl}
Replace the two preserving maps of the Twin Dragon
by their orientation-reversing counterparts:
$f_k(z) = a(\overline{z} \pm 1)$, $a = (1-i)/2$.
The digits $t_k = \pm 1$ and contraction~$a$ are unchanged,
so $\sigma^2 = 1$, $\tau = 1$, $\kappa = 1$;
but now $I = \emptyset$, $J = \{1,2\}$, giving
$u = 0$, $v = 1$.
Setting $u=0$, $v=1$, $\tau=1$ in~\eqref{eq:omega} gives
\[
  \omega
  = \frac{a^2(1+\overline{a^2})}{1-|a|^4}
  = \frac{(-i/2)(1+i/2)}{3/4}
  = \frac{1-2i}{3}\,,
\]
so $|\omega| = \sqrt{5}/3$ and $\rho = \sqrt{5}/3$.
Therefore
\[
  \mathrm{AR}^2
  = \frac{3-\sqrt{5}}{3+\sqrt{5}}
  = \frac{7-3\sqrt{5}}{2}
  = \frac{1}{\varphi^4}\,,
  \qquad
  \mathrm{AR} = \frac{1}{\varphi^2} = \frac{3-\sqrt{5}}{2} \approx 0.382.
\]
The attractor is a parallelogram
with vertices $\pm i$ and $\pm(2-i)$
(one verifies that both maps permute these four vertices).
It is more elongated than the Twin Dragon ($1/\varphi^2 < 1/\varphi$).
Thus replacing $z$ by $\overline{z}$ in both maps
(switching from $u=1$, $v=0$ to $u=0$, $v=1$)
changes the aspect ratio from $1/\varphi$ to $1/\varphi^2$,
while keeping it in $\mathbb{Q}(\sqrt{5})$.
\end{example}

\begin{figure}[ht]
\centering
\begin{minipage}[b]{0.45\textwidth}\centering
  \includegraphics[width=\textwidth]{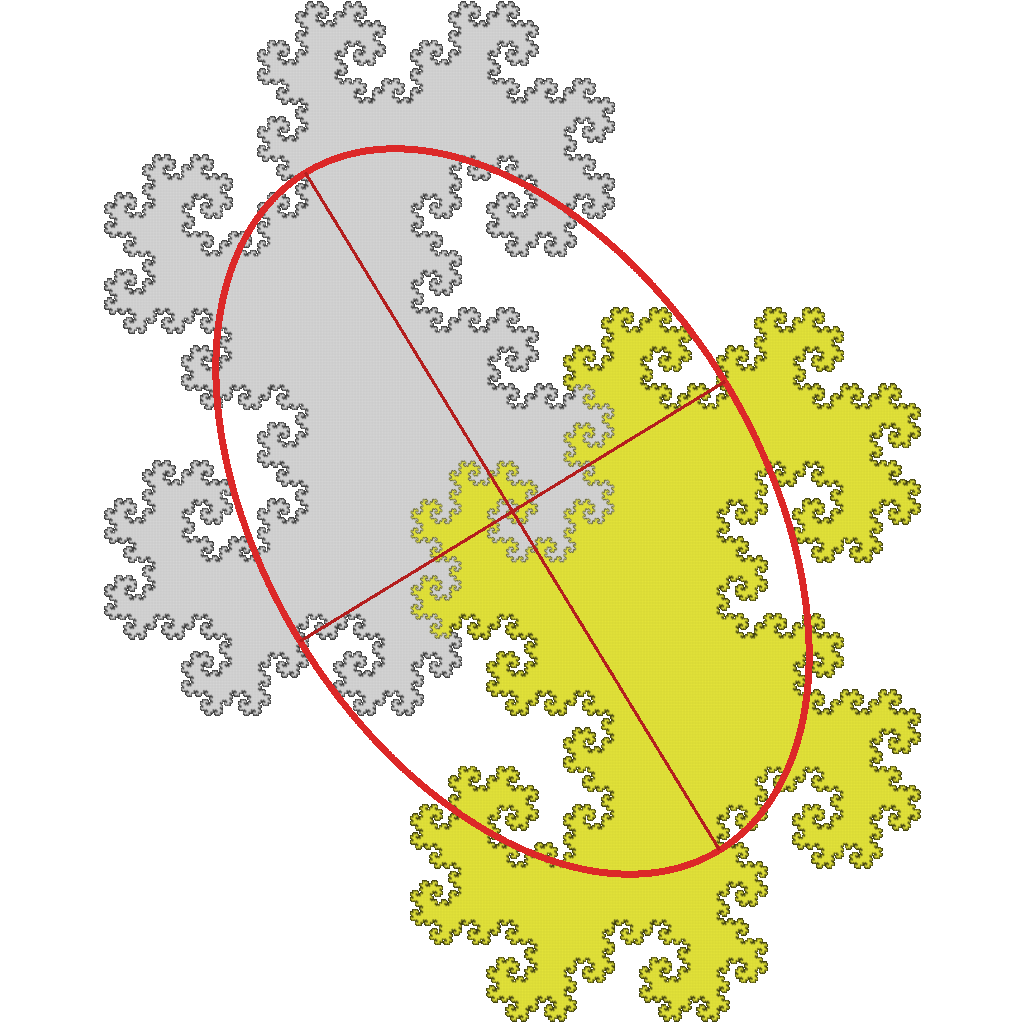}\\[2pt]
  (a) Twin Dragon
\end{minipage}\hfill
\begin{minipage}[b]{0.45\textwidth}\centering
  \includegraphics[width=\textwidth]{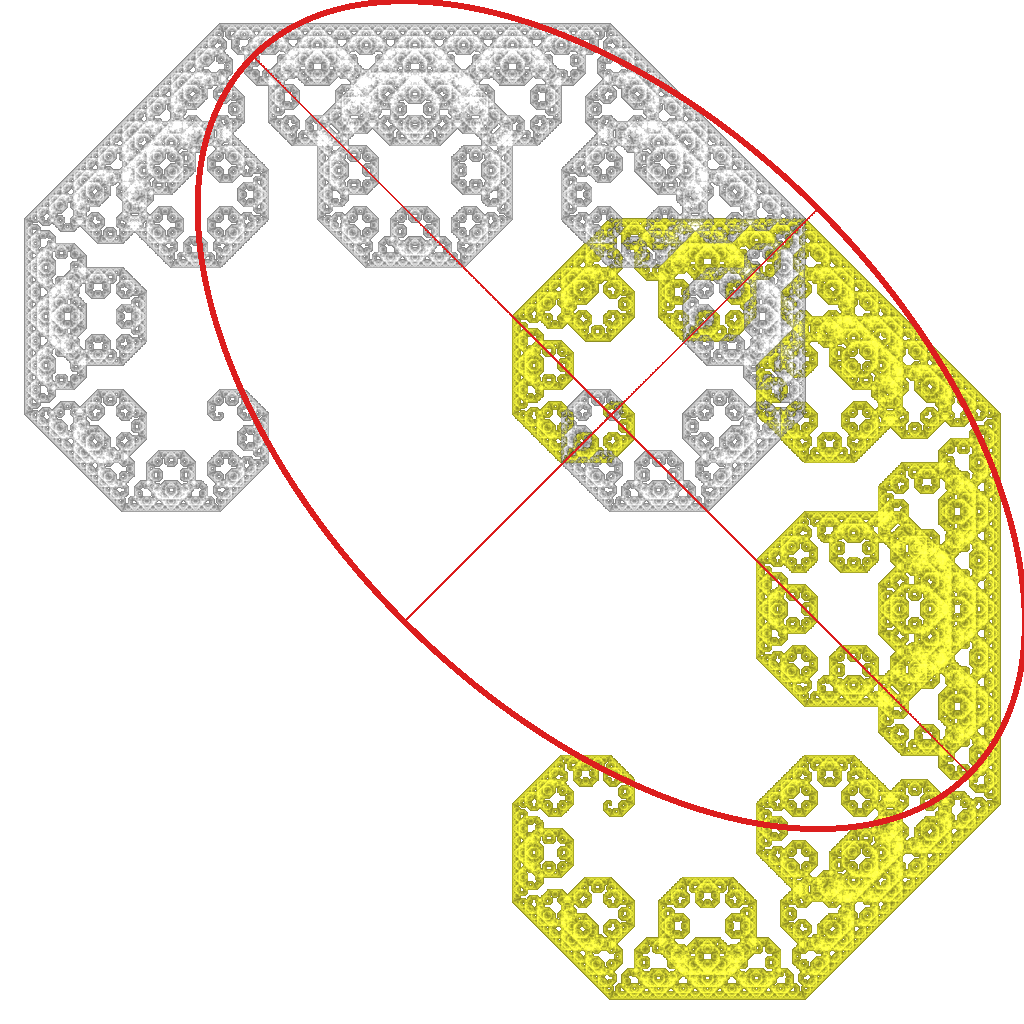}\\[2pt]
  (b) L\'evy C curve
\end{minipage}\\[8pt]
\begin{minipage}[b]{0.45\textwidth}\centering
  \includegraphics[width=\textwidth]{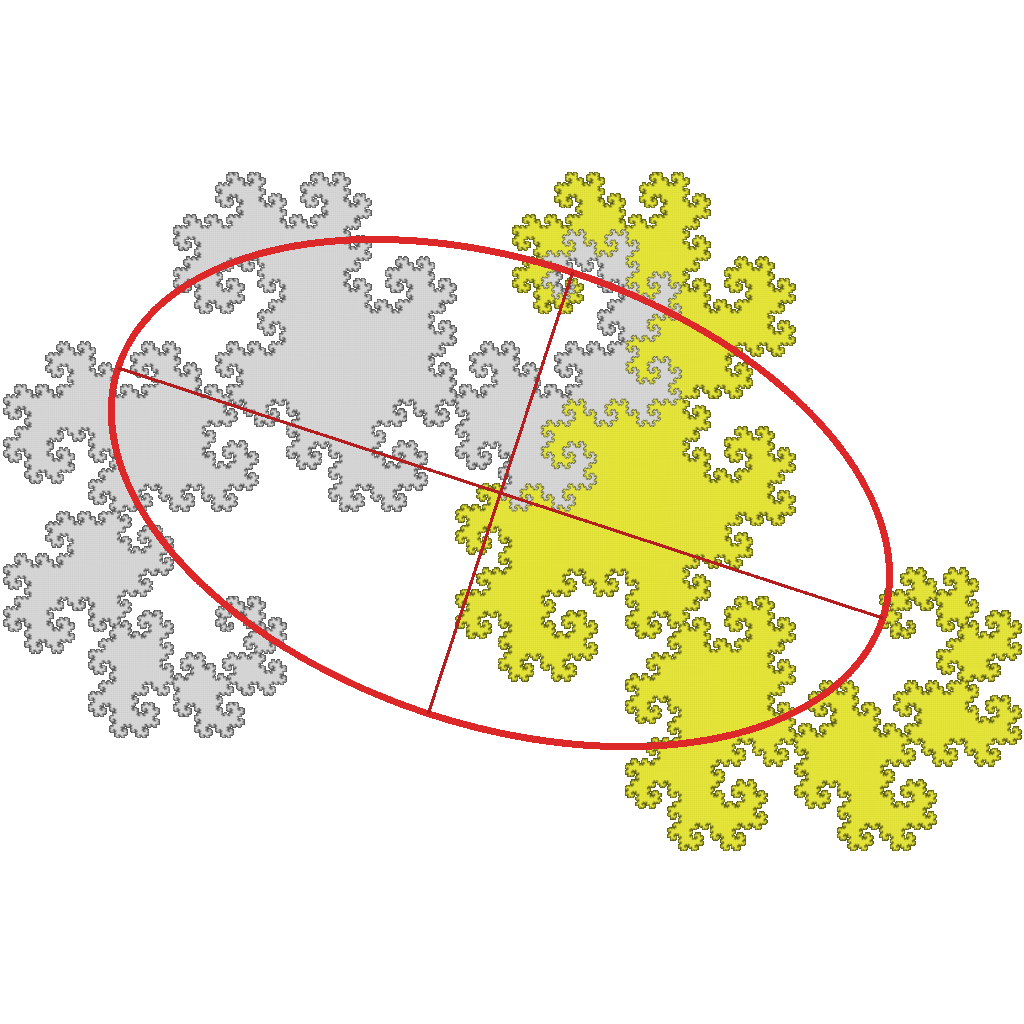}\\[2pt]
  (c) Heighway Dragon
\end{minipage}\hfill
\begin{minipage}[b]{0.45\textwidth}\centering
  \includegraphics[width=\textwidth]{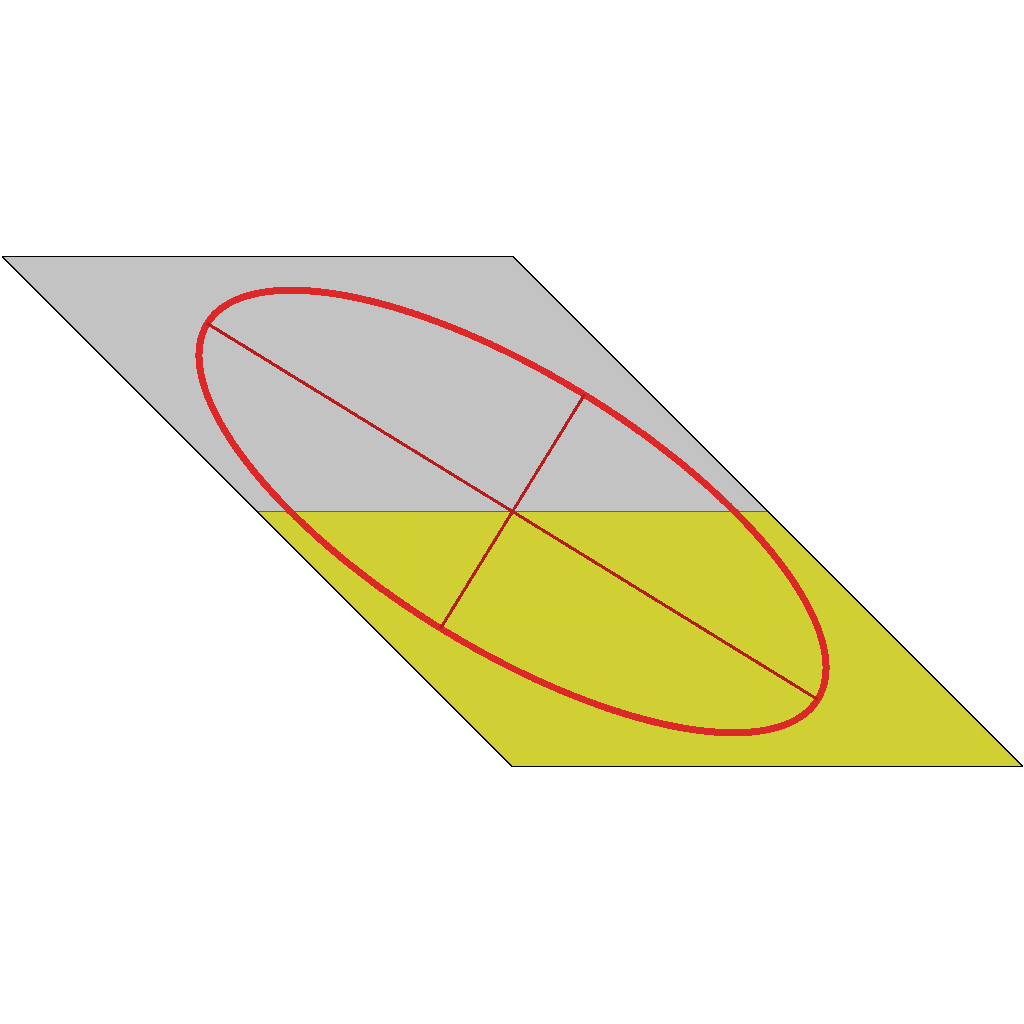}\\[2pt]
  (d) Conjugate parallelogram
\end{minipage}
\caption{Four IFS with contraction $a = (1-i)/2$ and centred digits
in~$\mathbb{Z}[i]$, shown with their $1.5\sigma$ covariance ellipses.
The Twin Dragon~(a) has $s_k=1$ and $\mathrm{AR}=1/\varphi$.
The L\'evy~C curve~(b) and Heighway Dragon~(c) both have $u=0$ and
$\mathrm{AR}=1/\sqrt{3}$: the ellipses share the same
aspect ratio but have different orientations
($\psi=0^\circ$ and $\psi\approx 18^\circ$, respectively).
The Conjugate parallelogram~(d) has all maps orientation-reversing
($u=0$, $v=1$) and $\mathrm{AR}=1/\varphi^2$.}
\label{fig:ellipses}
\end{figure}

\smallskip
Corollary~\ref{cor:field} assumes $\kappa=1$.
For arbitrary algebraic digits the anisotropy may change the quadratic
field, as the following variant records.

\begin{corollary}\label{cor:field-gen}
For any IFS $\{f_k(z)=a(z+t_k)\}$ with $a=1/\lambda$,
$\lambda\in\mathcal{O}_K\setminus\mathbb{R}$
\textup{(}$K$ an imaginary quadratic field\textup{)},
and digits $t_k\in\mathcal{O}_K$,
the anisotropy satisfies $\kappa^2\in\mathbb{Q}$.
If $\kappa>0$, then
\[
  \mathrm{AR}^2 \in
  \mathbb{Q}\!\left(\sqrt{\frac{\mathbf{N}(\lambda^2-1)}{\kappa^2}}\right).
\]
Equivalently, if $r=\kappa^2$ and $d_r$ is the square-free part of
$\mathbf{N}(\lambda^2-1)/r$, then
$\mathrm{AR}^2\in\mathbb{Q}(\sqrt{d_r}\,)$.  For $\kappa=0$ one has
$\mathrm{AR}=1$.
\end{corollary}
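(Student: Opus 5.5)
The plan is to start from formula~\eqref{eq:family} of Corollary~\ref{cor:kappa}, since here all $s_k=1$. It expresses $\mathrm{AR}^2$ through $|1-a^2|$, $|a|^2$ and $\kappa$, so the whole task is to identify the field generated by these three quantities. First I would show $\kappa^2\in\mathbb{Q}$. With digits $t_k\in\mathcal{O}_K$, both $\sum_k t_k^2$ and $\sum_k|t_k|^2$ lie in $\mathcal{O}_K$. The latter equals $\sum_k\mathbf{N}(t_k)\in\mathbb{Z}$, and it is positive because $\sigma^2>0$. The quantity $|\sum t_k^2|^2=\mathbf{N}(\sum t_k^2)$ is also a rational integer. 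Hence
\[
\kappa^2=\frac{\mathbf{N}\bigl(\sum_k t_k^2\bigr)}{\bigl(\sum_k\mathbf{N}(t_k)\bigr)^2}\in\mathbb{Q}.
\]
One caveat: if the digits are not centred, Remark~\ref{rem:centering} replaces $t_k$ by $t_k-\bar t$ with $\bar t\in\frac1N\mathcal{O}_K$. The same computation then goes through in $K$ with rational norms, so $\kappa^2\in\mathbb{Q}$ still holds.

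Next I would clear denominators in~\eqref{eq:family}. As in the proof of Corollary~\ref{cor:field}, substitute $|a|^2=1/\mathbf{N}(\lambda)$ and $|1-a^2|=\sqrt{\mathbf{N}(\lambda^2-1)}/\mathbf{N}(\lambda)$, then multiply numerator and denominator by $\mathbf{N}(\lambda)$. This gives
\[
\mathrm{AR}^2=\frac{\sqrt{\mathbf{N}(\lambda^2-1)}-(\mathbf{N}(\lambda)-1)\kappa}{\sqrt{\mathbf{N}(\lambda^2-1)}+(\mathbf{N}(\lambda)-1)\kappa}.
\]
For $\kappa>0$, divide through by $\kappa$. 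With $n=\mathbf{N}(\lambda)-1\in\mathbb{Q}$ and $\beta=\sqrt{\mathbf{N}(\lambda^2-1)/\kappa^2}$, this becomes $\mathrm{AR}^2=(\beta-n)/(\beta+n)$, which lies in $\mathbb{Q}(\beta)$.

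Since $\kappa^2\in\mathbb{Q}$ by the first step, $\beta^2=\mathbf{N}(\lambda^2-1)/r$ is a positive rational. Write $\beta^2=p/q$ in lowest terms. Then $\beta=\sqrt{pq}/q$, and $\mathbb{Q}(\beta)=\mathbb{Q}(\sqrt{pq})=\mathbb{Q}(\sqrt{d_r})$, where $d_r$ is the square-free part of $pq$. This is how I would interpret ``square-free part'' for a rational number. The denominator $\beta+n$ is nonzero because $\mathbf{N}(\lambda^2-1)>0$: $\lambda\notin\mathbb{R}$ forces $\lambda^2\neq1$.

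For $\kappa=0$ the claim is immediate from Corollary~\ref{cor:kappa}(a), which gives $\rho=0$ and so $\mathrm{AR}=1$. I expect no real obstacle. The only delicate points are the definition of the square-free part of a rational (rather than integral) quantity, and checking that centring keeps the digits in $K$ so that the norms remain rational.
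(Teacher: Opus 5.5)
Your proposal is correct and follows the paper's proof essentially step for step. You centre the digits so that $\tau\in K$ and $\sigma^2\in\mathbb{Q}$, giving $\kappa^2=\mathbf{N}(\tau)/\sigma^4\in\mathbb{Q}$; then you substitute $|a|^2=1/\mathbf{N}(\lambda)$ and $|1-a^2|=\sqrt{\mathbf{N}(\lambda^2-1)}/\mathbf{N}(\lambda)$ into~\eqref{eq:family} and divide by $\kappa$. Your extra remarks on the square-free part of a rational number and on the nonvanishing denominator are minor clarifications that the paper leaves implicit.
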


\begin{proof}
Let $\bar t = N^{-1}\sum_k t_k$.  Since $t_k\in K$ and $\bar t\in K$,
we have $\tau = \sum(t_k-\bar t)^2 \in K$ and
$\sigma^2 = \sum |t_k-\bar t|^2 \in \mathbb{Q}$.
Thus $\kappa^2 = |\tau|^2/\sigma^4 = \mathbf{N}(\tau)/\sigma^4\in\mathbb{Q}$.
Substituting $|a|^2=1/N_0$ and
$|1-a^2|=\sqrt{M}/N_0$, where
$N_0=\mathbf{N}(\lambda)$ and $M=\mathbf{N}(\lambda^2-1)$, gives
\[
  \mathrm{AR}^2
  = \frac{\sqrt{M}-(N_0-1)\kappa}
         {\sqrt{M}+(N_0-1)\kappa}.
\]
For $\kappa>0$, divide numerator and denominator by~$\kappa$:
\[
  \mathrm{AR}^2
  = \frac{\sqrt{M/\kappa^2}-(N_0-1)}
         {\sqrt{M/\kappa^2}+(N_0-1)},
\]
which lies in $\mathbb{Q}(\sqrt{M/\kappa^2}\,)$ because
$M/\kappa^2\in\mathbb{Q}$.  If $\kappa=0$, then
\eqref{eq:family} gives $\mathrm{AR}=1$.
\end{proof}

\begin{remark}
For collinear translations ($\kappa=1$),
formula~\eqref{eq:family} shows that $\mathrm{AR}$ depends on $a$ only
through $(|a|^2, |1-a^2|)$: the attractor is nearly round when $|1-a^2|$
is large relative to $1-|a|^2$, and degenerates to a segment when
$|1-a^2|\approx 1-|a|^2$ (i.e.\ $a$ nearly real).
For every $a$ in the family, $\mathrm{AR}(a) \leq |a|$, with equality
if and only if $a$ is purely imaginary
(the attractor is then a rectangle, cf.\ \S\ref{sec:family-N}).
Indeed, $|1-a^2| \leq 1+|a|^2$ (triangle inequality), with equality iff
$a^2/|a^2|=-1$, i.e.\ $a\in i\mathbb{R}$.
Substituting $|1-a^2|=1+|a|^2$ into~\eqref{eq:family} gives
$\mathrm{AR}^2 = |a|^2$.
Figure~\ref{fig:ar} shows $\mathrm{AR}$ over the entire sub-family $|a|=1/\sqrt{2}$.
\end{remark}

\begin{remark}\label{rem:orientation}
The orientation formula~\eqref{eq:psi} follows from the decomposition
of the covariance matrix.  Writing $\omega = |\omega|\, e^{2i\psi}$
for $Z = X + iY$ with $E[Z]=0$:
\[
  M = \frac{E[|Z|^2]}{2}\,I
    + \frac{|\omega|}{2}
      \begin{pmatrix}\cos 2\psi & \sin 2\psi \\ \sin 2\psi & -\cos 2\psi\end{pmatrix},
\]
whose larger eigenvalue has eigenvector $(\cos\psi, \sin\psi)$.
For the collinear case~\eqref{eq:psi-simple},
$\operatorname{Im}(a^2)$ measures the rotation per level of the
IFS hierarchy, and $\operatorname{Re}(a^2)-|a|^4$ is the real drift;
their ratio determines the principal-axis direction.
\end{remark}

\section{The \texorpdfstring{$\lambda_{N,\ell}$}{λ\_{N,ℓ}} family}\label{sec:family-N}

A natural two-parameter family of plane-filling tiles arises
by applying Corollary~\ref{cor:kappa} to collinear digits.
For integers $N\geq 2$ and $0 \leq \ell < 2\sqrt{N}$, let
$\lambda_{N,\ell}$ be a root of $x^2-\ell x+N$
(so $|\lambda_{N,\ell}|^2=N$ and $\lambda_{N,\ell}\notin\mathbb{R}$).
The digits $\{0,1,\ldots,N\!-\!1\}$ form a complete residue system (CRS) of
$\mathbb{Z}[\lambda_{N,\ell}]$ modulo~$\lambda_{N,\ell}$
\cite{Kenyon1992, LagariasWang1996}.
Indeed, every element $a+b\lambda_{N,\ell}\in\mathbb{Z}[\lambda_{N,\ell}]$
satisfies $\lambda_{N,\ell}(a+b\lambda_{N,\ell})
= a\lambda_{N,\ell}+b(\ell\lambda_{N,\ell}-N)$
(using $\lambda_{N,\ell}^2=\ell\lambda_{N,\ell}-N$),
whose integer part is $-bN$;
hence $a+b\lambda_{N,\ell}\equiv 0\pmod{\lambda_{N,\ell}}$ forces
$b=0$ and $N\mid a$, so
$\lambda_{N,\ell}\mathbb{Z}[\lambda_{N,\ell}]\cap\mathbb{Z}=N\mathbb{Z}$
and the $N$ residues $\{0,\ldots,N\!-\!1\}$ are distinct.
The $N$-map IFS with $a=1/\lambda_{N,\ell}$ and digits centred
to $\sum t_j=0$ therefore produces a tile.
The aspect ratio and orientation of these tiles
admit explicit formulas.

\begin{proposition}\label{prop:family-N}
Let $N\geq 2$ and $0 \leq \ell < 2\sqrt{N}$, and let $\lambda = \lambda_{N,\ell}$
be a root of $x^2-\ell x+N$.
For the $N$-map tiling IFS with contraction $a = 1/\lambda$:
\begin{enumerate}[{\upshape (i)}]
\item $|\lambda^2-1|^2 = (N+1)^2 - \ell^2$, and the aspect ratio is
\begin{equation}\label{eq:family-N}
  \mathrm{AR}_{N,\ell}
  = \sqrt{\frac{\sqrt{(N+1)^2-\ell^2}\;-(N-1)}
               {\sqrt{(N+1)^2-\ell^2}\;+(N-1)}}\,.
\end{equation}
\item The major axis of inertia makes angle
\begin{equation}\label{eq:psi-N}
  \psi_{N,\ell}
  = -\tfrac{1}{2}\arg(\lambda^2-1)
\end{equation}
with the positive $x$-axis, where
$\lambda^2-1 = \tfrac{1}{2}\bigl((\ell^2\!-\!2N\!-\!2) + i\ell\sqrt{4N-\ell^2}\bigr)$.
\item When $\ell=2$ \textup{(}the sub-family
$\lambda_N = 1+i\sqrt{N\!-\!1}$\textup{)},
\eqref{eq:family-N} simplifies to
$\mathrm{AR}_N = (\sqrt{N+3}-\sqrt{N-1})/2$,
and $\tan\psi$ satisfies $t^2+t\sqrt{N\!-\!1}-1=0$.
When additionally $N\!-\!1=m^2$ for an integer $m\geq 1$, this equation
becomes $t^2+mt-1=0$, whose roots are $1/\mu_m$ and $-\mu_m$,
where $\mu_m=(m+\sqrt{m^2+4})/2$ is the $m$-th metallic mean
\textup{(}the root of $x^2-mx-1=0$\textup{)};
so $\tan\psi\in\{1/\mu_m,\,-\mu_m\}$, with the sign fixed by~\eqref{eq:psi-N}.
For $m=1$, $\tan\psi=-\varphi$; for $m=2$, $\tan\psi=-(1+\sqrt{2})$.
\end{enumerate}
\end{proposition}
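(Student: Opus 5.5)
The plan is to specialise Corollary~\ref{cor:field} (for the aspect ratio) and the orientation formula~\eqref{eq:psi} of Theorem~\ref{thm:family} (for the angle) to the digit set $\{0,\ldots,N-1\}$. After centring, these digits are real, hence collinear, so $\kappa=1$ and $\tau>0$. First I write $\lambda=(\ell+i\sqrt{4N-\ell^2})/2$, so that $\mathbf{N}(\lambda)=N$ and $\operatorname{Re}\lambda=\ell/2$. The relation $\lambda^2=\ell\lambda-N$ gives $\lambda^2-1=\ell\lambda-(N+1)$. Expanding the squared modulus then gives
\[
|\lambda^2-1|^2=\ell^2N-2\ell(N+1)\operatorname{Re}\lambda+(N+1)^2=(N+1)^2-\ell^2 .
\]
The same expression yields the stated real and imaginary parts $\tfrac12(\ell^2-2N-2)$ and $\tfrac12\ell\sqrt{4N-\ell^2}$. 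Substituting $\mathbf{N}(\lambda)=N$ and $\mathbf{N}(\lambda^2-1)=(N+1)^2-\ell^2$ into~\eqref{eq:field} and taking the square root gives~\eqref{eq:family-N}. The derivation of~\eqref{eq:field} only used $|a|^2=1/N$ and $|1-a^2|=|\lambda^2-1|/N$, so it applies even when $\lambda\notin\mathcal{O}_K$ is merely an algebraic integer of this form.

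For (ii), by Theorem~\ref{thm:family} with $u=1$, $v=0$, we have $\omega=a^2\tau/(1-a^2)=\tau/(\lambda^2-1)$. Since $\tau>0$, this gives $\arg\omega=-\arg(\lambda^2-1)$, and~\eqref{eq:psi} yields~\eqref{eq:psi-N}. Here $\psi$ is understood modulo $\pi$, which is harmless for an axis direction.

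For (iii), set $\ell=2$ and $s=\sqrt{N-1}$. Then $(N+1)^2-4=(N+3)(N-1)$, and $\sqrt{N-1}$ cancels from numerator and denominator of~\eqref{eq:family-N}. This leaves
\[
\mathrm{AR}_N^2=\frac{\sqrt{N+3}-\sqrt{N-1}}{\sqrt{N+3}+\sqrt{N-1}}.
\]
Rationalising and using $(N+3)-(N-1)=4$ gives $\mathrm{AR}_N^2=\bigl((\sqrt{N+3}-\sqrt{N-1})/2\bigr)^2$, hence the stated $\mathrm{AR}_N$. For the angle, (ii) gives $\lambda^2-1=-s^2+2is=s(-s+2i)$. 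Hence $\tan 2\psi=\tan(-\arg(\lambda^2-1))=2/s$. Writing $t=\tan\psi$ and using $2t/(1-t^2)=2/s$, we get $t^2+st-1=0$. With $s=m$ the roots are $1/\mu_m$ and $-\mu_m$, because $-\mu_m$ solves $t^2+mt-1=0$ exactly when $\mu_m$ solves $x^2-mx-1=0$, and the product of the roots is $-1$.

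The only delicate step is choosing the correct root, i.e.\ fixing the sign. The point $-s+2i$ lies in the open second quadrant, so $\arg(\lambda^2-1)\in(\pi/2,\pi)$. Hence $2\psi\in(-\pi,-\pi/2)$ and $\psi\in(-\pi/2,-\pi/4)$, which forces $\tan\psi<-1$ and selects the root $-\mu_m$. This gives $\tan\psi=-\varphi$ for $m=1$ and $\tan\psi=-(1+\sqrt2)$ for $m=2$. If the opposite root $\lambda=1-i\sqrt{N-1}$ is chosen instead, the whole picture is reflected in the real axis, so I would state the sign for the root with positive imaginary part, consistent with the Twin Dragon conventions of Example~\ref{ex:twin-dragon}.
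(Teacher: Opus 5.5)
Your proof is correct and follows the paper's argument: you substitute $|a|^2=1/N$ and $|1-a^2|=|\lambda^2-1|/N$ into the $\kappa=1$ formula, read off $\psi$ from $\omega=\tau/(\lambda^2-1)$ with $\tau>0$, and apply the double-angle identity for $\ell=2$. Your quadrant argument ($-s+2i$ in the open second quadrant, forcing $\tan\psi<-1$) makes explicit the root selection that the paper leaves as ``the sign fixed by~\eqref{eq:psi-N}''; your hedge about $\lambda\notin\mathcal{O}_K$ is unnecessary, since a root of the monic integer polynomial $x^2-\ell x+N$ with $\ell^2<4N$ automatically lies in $\mathcal{O}_K$ for the imaginary quadratic field $K=\mathbb{Q}(\lambda)$.
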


\begin{proof}
Part~(i): Expanding $\lambda = (\ell+i\sqrt{4N-\ell^2})/2$ gives
\[
  |\lambda^2-1|^2
  = \tfrac{1}{4}\bigl[(\ell^2\!-\!2N\!-\!2)^2 + \ell^2(4N-\ell^2)\bigr]
  = (N+1)^2 - \ell^2.
\]
Substituting $|a|^2 = 1/N$ and $|1-a^2| = \sqrt{(N+1)^2-\ell^2}/N$
into~\eqref{eq:family} yields~\eqref{eq:family-N}.
Part~(ii): for collinear digits $\omega = a^2\sigma^2/(1-a^2)
= \sigma^2/(\lambda^2-1)$, so~\eqref{eq:psi} gives
$\psi = -\tfrac{1}{2}\arg(\lambda^2-1)$.
Part~(iii): Setting $\ell=2$ gives
$(N+1)^2-4 = (N-1)(N+3)$; the AR formula rationalises to
$(\sqrt{N+3}-\sqrt{N-1})/2$.
For the tangent, $\lambda^2-1 = (1-N) + 2i\sqrt{N\!-\!1}$;
the double-angle identity $\tan 2\psi = 2\tan\psi/(1-\tan^2\!\psi)
= 2/\sqrt{N\!-\!1}$ yields the stated equation.
\end{proof}

Selected values:

\smallskip
\centerline{\small
\begin{tabular}{ccccl}
$N$ & $\ell$ & $\lambda_{N,\ell}$ & field & $\mathrm{AR}_{N,\ell}$ \\[2pt]\hline\\[-8pt]
$2$ & $0$ & $i\sqrt{2}$ & $\mathbb{Q}$ &
  $1/\sqrt{2} \approx 0.707$ (rectangle) \\
$2$ & $1$ & $(1+i\sqrt{7})/2$ & $\mathbb{Q}(\sqrt{2})$ &
  $\approx 0.691$ (tame twindragon) \\
$2$ & $2$ & $1+i$ & $\mathbb{Q}(\sqrt{5})$ &
  $1/\varphi \approx 0.618$ \\
$5$ & $2$ & $1+2i$ & $\mathbb{Q}(\sqrt{2})$ &
  $\sqrt{2}-1 \approx 0.414$ \\
$5$ & $4$ & $2+i$ & $\mathbb{Q}(\sqrt{5})$ &
  $1/\varphi^3 \approx 0.236$ \\
\end{tabular}}

\smallskip\noindent
The ``field'' column records $\mathbb{Q}(\sqrt{d}\,)$ where $d$ is the
square-free part of $(N\!+\!1)^2-\ell^2$.
The entry $(N,\ell)=(2,1)$ recovers the tame twindragon
(see below),
confirming that it belongs to the same two-parameter family.
The entry $(N,\ell)=(5,4)$ gives $\lambda_{5,4}=2+i$
--- the same contraction as the $2$-map tile in row~2 of
Table~\ref{tab:examples}, illustrating again
that AR depends only on $a = 1/\lambda$ (Corollary~\ref{cor:kappa}).

For fixed $\ell$, $\mathrm{AR}_{N,\ell}\to 0$ as $N\to\infty$
(stronger contraction elongates the attractor).
For fixed $N$, increasing $\ell$ toward $2\sqrt{N}$ drives $a$
toward the real axis and $\mathrm{AR}\to 0$;
setting $\ell=0$ gives $\lambda = i\sqrt{N}$ (purely imaginary~$a$)
and $\mathrm{AR} = 1/\sqrt{N}$, the maximum for a given~$N$.
For $N=2$ the IFS $\{-\tfrac{i}{\sqrt{2}}(z\pm 1)\}$
has attractor the rectangle $[-1,1]\times[-\sqrt{2},\sqrt{2}]$
(one verifies that $f_1$ and $f_2$ map it to the top and bottom halves);
this rectangle tiles the plane by translation.

\textit{Tame twindragon.}
The entry $(N,\ell)=(2,1)$ gives $\lambda=(1+i\sqrt{7})/2$
and the attractor is the
\emph{tame twindragon} \cite[p.~550]{Bandt1991}, the attractor of $\{f_{1,2}(z)=a(z\pm 1)\}$
with $a=(1-i\sqrt{7})/4$.
One computes $1-a^2=(11+i\sqrt{7})/8$, hence $|1-a^2|=\sqrt{2}$, and
Corollary~\ref{cor:kappa} gives
\[
  \mathrm{AR}^2 = \frac{\sqrt{2}-\tfrac{1}{2}}{\sqrt{2}+\tfrac{1}{2}}
               = \frac{2\sqrt{2}-1}{2\sqrt{2}+1}
               = \frac{9-4\sqrt{2}}{7}
               \approx 0.478,
  \qquad
  \mathrm{AR} \approx 0.691.
\]
This tile arises as the numeration digit set for
the complex base $\beta=(1+i\sqrt{7})/2$
(satisfying $\beta^2-\beta+2=0$ and $|\beta|^2=2$)
via $a=1/\beta$;\footnote{The attractor is the set
$\bigl\{\sum_{k\geq 1}d_k\beta^{-k}:d_k\in\{-1,+1\}\bigr\}\subset\mathbb{C}$.}
it is marked in Figure~\ref{fig:ar}.

The connection to metallic ratios in part~(iii) is not accidental.
The $m$-th metallic ratio $\mu_m = (m+\sqrt{m^2+4})/2$ is the
fundamental unit of $\mathbb{Z}[\mu_m]
\subset\mathbb{Q}(\sqrt{m^2+4})$.
In the $\ell=2$ sub-family, $\sqrt{m^2+4} = \sqrt{N+3}$
and the Cayley-type transform~\eqref{eq:field} maps
$(N\!+\!1)^2-4 = (N\!-\!1)(N\!+\!3)$ into $\mathbb{Q}(\sqrt{N+3})$.
For $N=2$ and $N=5$ the aspect ratio is an exact power of the
fundamental unit ($1/\varphi$ and $\sqrt{2}-1$ respectively);
for other~$N$ it remains in $\mathbb{Q}(\sqrt{N+3})$ but is not
generally a unit.

\begin{remark}
Both the aspect ratio and the principal-axis orientation of the
$\lambda_{N,\ell}$-family are governed by a single element of the
ring of integers: $\lambda^2-1 = \ell\lambda-(N\!+\!1) \in \mathbb{Z}[\lambda]$.
Its norm $(N\!+\!1)^2-\ell^2$ determines $\mathrm{AR}$
via~\eqref{eq:family-N}, and its argument determines~$\psi$
via~\eqref{eq:psi-N}.
At the boundary $\ell=0$ one has $\lambda^2-1 = -(N\!+\!1)$,
giving $\psi = -\pi/2$ (vertical major axis);
as $\ell\to 2\sqrt{N}$ the argument tends to~$0$ and
$\psi\to 0$ (horizontal, degenerate).
\end{remark}

\begin{figure}[ht]
\centering
\begin{tikzpicture}
\begin{axis}[
  width=8cm, height=5.5cm,
  xlabel={$\theta = |\arg(a)|$},
  ylabel={$\mathrm{AR}$},
  xmin=0, xmax=90,
  ymin=0, ymax=0.82,
  xtick={0,45,90},
  xticklabels={$0$, $\pi/4$, $\pi/2$},
  ytick={0, 0.6180, 0.7071},
  yticklabels={$0$, $1/\varphi$, $1/\sqrt{2}$},
  grid=major,
  grid style={dotted, gray!60},
  axis lines=left,
]
\addplot[blue!70!black, thick, smooth, domain=0:90, samples=200]
  { sqrt( (sqrt(1.25 - cos(2*x)) - 0.5) / (sqrt(1.25 - cos(2*x)) + 0.5) ) };
\addplot[red, mark=*, mark size=2pt, only marks] coordinates {(45, 0.6180)};
\addplot[red, mark=*, mark size=2pt, only marks] coordinates {(69.295, 0.6911)};
\addplot[red, mark=*, mark size=2pt, only marks] coordinates {(90, 0.7071)};
\node[anchor=north, font=\small] at (axis cs:45,0.610) {Twin Dragon};
\node[anchor=north, font=\small] at (axis cs:69.295,0.683) {tame twindragon};
\node[anchor=south east, font=\small] at (axis cs:88,0.717) {rectangle};
\addplot[dashed, thin, gray!80] coordinates {(0,0.7071)(90,0.7071)};
\addplot[dashed, thin, gray!80] coordinates {(45,0)(45,0.6180)};
\addplot[dashed, thin, gray!80] coordinates {(69.295,0)(69.295,0.6911)};
\addplot[dashed, thin, gray!80] coordinates {(90,0)(90,0.7071)};
\end{axis}
\end{tikzpicture}
\caption{Aspect ratio $\mathrm{AR}(\theta)$ of the attractor of
$\{a(z\pm 1)\}$ for $|a|=1/\sqrt{2}$, as a function of
$\theta = |\arg(a)|$.  The Twin Dragon ($\theta=\pi/4$),
the tame twindragon ($\theta=\arctan\sqrt{7}$), and
the rectangle attractor ($\theta=\pi/2$) are marked in red.}
\label{fig:ar}
\end{figure}
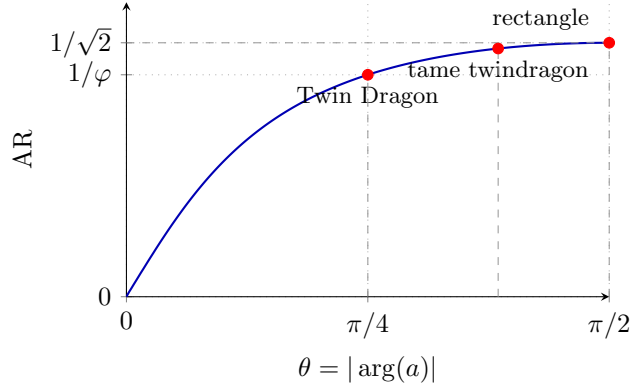

\section{Tiles over \texorpdfstring{$\mathbb{Z}[i]$}{ℤ[i]}}\label{sec:metallic}

In this section we specialise to tiles
over~$\mathbb{Z}[i]$: the contraction is $a = 1/\lambda$ for
a Gaussian integer~$\lambda$ and the translations form a
CRS modulo~$\lambda$.
Corollary~\ref{cor:kappa} then applies with $|a|^2=1/N$ and
$\kappa$ determined by the choice of CRS.

The dependence on $\kappa$ is already visible for a single
expansion factor.
Take $\lambda = 2+i$ ($N\!=\!5$).
The collinear digit set $\{0,1,2,3,4\}$ gives $\kappa=1$ and
$\mathrm{AR}=1/\varphi^3$ (row~$(5,4)$ of the table in
Section~\ref{sec:family-N}).
Replacing it by the two-dimensional CRS
$\{0,\,1,\,-i,\,i,\,1\!+\!i\}$
(which is still a complete residue system of $\mathbb{Z}[i]$
modulo~$2\!+\!i$, since $i\equiv 3\pmod{2\!+\!i}$)
yields $\sigma^2=4/5$, $\tau=(-8+6i)/25$, and $\kappa=1/2$.
Substituting into~\eqref{eq:family}:
\[
  \mathrm{AR}^2
  = \frac{\sqrt{5}-1}{\sqrt{5}+1}
  = \frac{3-\sqrt{5}}{2}
  = \frac{1}{\varphi^2},
  \qquad
  \mathrm{AR} = \frac{1}{\varphi}.
\]
Thus the same golden ratio governs this $5$-map tile with $\kappa=1/2$
as it does the Twin Dragon ($N\!=\!2$, $\kappa\!=\!1$),
while collinear digits for the same~$\lambda$ give the smaller
value~$1/\varphi^3$ (Figure~\ref{fig:kappa-tile}).
A natural question arises: for which~$\kappa$ does the aspect ratio
equal $1/\mu_m$, the reciprocal of the $m$-th metallic mean?

\begin{proposition}\label{prop:golden}
Let $\lambda = p+qi \in \mathbb{Z}[i]\setminus\mathbb{R}$ with
$|\lambda|^2 = N$, and let $m\geq 1$ be a positive integer.
The digit anisotropy required for $\mathrm{AR}=1/\mu_m$
\textup{(}the reciprocal $m$-th metallic mean
$\mu_m=(m+\sqrt{m^2+4})/2$\textup{)} is
\begin{equation}\label{eq:kappa-m}
  \kappa_m^*
  = \frac{m\,|\lambda^2-1|}{\sqrt{m^2+4}\,(N-1)},
\end{equation}
and $\kappa_m^*\leq 1$ if and only if $mq\leq N-1$.
In particular:
\begin{enumerate}[\upshape(a)]
\item For $m=1$ \textup{(}golden ratio\textup{)},
  $\kappa_1^*\leq 1$ for every~$\lambda$,
  with equality only for $\lambda=1+i$.
\item The collinear case $\kappa=1$ corresponds to
  $m=(N\!-\!1)/q$, provided this is an integer;
  thus the collinear aspect ratio of
  Corollary~\textup{\ref{cor:field}} equals
  $1/\mu_{(N-1)/q}$.
\item For $q=1$ \textup{(}i.e.\ $\lambda=p+i$\textup{)},
  every metallic ratio $\mu_1,\ldots,\mu_{p^2}$ is accessible.
\end{enumerate}
\end{proposition}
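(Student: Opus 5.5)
The plan is to reduce everything to Corollary~\ref{cor:kappa}, which already expresses $\mathrm{AR}$ through $\kappa$ alone, and then do two small algebraic computations. First I will rewrite \eqref{eq:family} in terms of the ellipticity, $\mathrm{AR}^2=(1-\rho)/(1+\rho)$ with $\rho=(1-|a|^2)\kappa/|1-a^2|$. With $a=1/\lambda$ and $|\lambda|^2=N$ one has $1-|a|^2=(N-1)/N$ and $|1-a^2|=|\lambda^2-1|/N$, so
\[
  \rho=\frac{(N-1)\,\kappa}{|\lambda^2-1|}.
\]
Since $\rho\mapsto(1-\rho)/(1+\rho)$ is a strictly decreasing bijection of $[0,1]$, the condition $\mathrm{AR}=1/\mu_m$ is equivalent to $\rho=(\mu_m^2-1)/(\mu_m^2+1)$.

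Next I will evaluate this target value of $\rho$ using the defining relation $\mu_m^2=m\mu_m+1$, equivalently $\mu_m-1/\mu_m=m$ and $\mu_m+1/\mu_m=\sqrt{m^2+4}$. This gives $\mu_m^2-1=m\mu_m$ and $\mu_m^2+1=\mu_m\sqrt{m^2+4}$, hence $\rho=m/\sqrt{m^2+4}$. Solving $\rho=(N-1)\kappa/|\lambda^2-1|$ for $\kappa$ yields \eqref{eq:kappa-m}.

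For the criterion $\kappa_m^*\le 1$, I will square both sides to get the equivalent inequality
\[
  m^2|\lambda^2-1|^2\le (m^2+4)(N-1)^2 .
\]
The key identity to establish is $|\lambda^2-1|^2-(N-1)^2=4q^2$. It follows from $|\lambda^2-1|^2=N^2-2\operatorname{Re}(\lambda^2)+1=N^2-2(p^2-q^2)+1$ together with $N=p^2+q^2$. The inequality then becomes $m^2q^2\le (N-1)^2$, i.e.\ $m|q|\le N-1$. One may take $q>0$ after complex conjugation, which changes neither $\mathrm{AR}$ nor $|\lambda^2-1|$.

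The three special cases then follow from this criterion.
\begin{enumerate}[\upshape(a)]
\item For $m=1$ the condition reads $q\le p^2+q^2-1$, i.e.\ $q^2-q+p^2-1\ge 0$. This holds for every integer $q\ge1$ with $p\ne0$, and also when $p=0$, because then $\lambda\notin\mathbb{R}$ with $|\lambda|>1$ forces $q\ge 2$. Equality requires $q^2=q$ and $p^2=1$, i.e.\ $\lambda=\pm1+i$. This is $1+i$ up to the symmetries $\lambda\mapsto-\lambda,\bar\lambda$, which leave $\mathrm{AR}$ unchanged; I would state the equality case with this normalisation.
\item The case $\kappa=1$ is exactly equality $mq=N-1$, which matches Corollary~\ref{cor:field} through the monotonicity in $\kappa$ from Corollary~\ref{cor:kappa}(a).
\item For $q=1$ we have $N-1=p^2$, so every $m\le p^2$ is admissible.
\end{enumerate}

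I expect no step to be a genuine obstacle. The norm identity $|\lambda^2-1|^2=(N-1)^2+4q^2$ is the only place where care is needed, along with the interpretive points that ``accessible'' means $\kappa_m^*\in[0,1]$ and that the sign of $q$ and $p$ is normalised.
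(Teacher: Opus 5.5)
Your proposal is correct and follows the paper's proof essentially step for step: solve \eqref{eq:family} for $\kappa$, square the inequality $\kappa_m^*\le 1$, reduce it via $|\lambda^2-1|^2=(N-1)^2+4q^2$ to $mq\le N-1$, and read off (a)--(c) from that criterion. Your explicit use of $|q|$ and your treatment of the associate $-1+i$ in the equality case tighten the statement slightly, since the paper tacitly normalises $q>0$ and $p\ge 0$.
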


\begin{proof}
Setting $\mathrm{AR}^2 = 1/\mu_m^2$ in~\eqref{eq:family}
and solving for $\kappa$ gives
$\kappa = |1-a^2|\cdot m/\bigl((1-|a|^2)\sqrt{m^2+4}\bigr)$;
substituting $a=1/\lambda$, $|a|^2=1/N$,
$|1-a^2|=\sqrt{\mathbf{N}(\lambda^2-1)}/N$
yields~\eqref{eq:kappa-m}.

The condition $\kappa_m^*\leq 1$ becomes
$m^2|\lambda^2-1|^2 \leq (m^2+4)(N\!-\!1)^2$.
Since $|\lambda^2-1|^2 = (N\!-\!1)^2+4q^2$, this simplifies to
$4m^2q^2 \leq 4(N\!-\!1)^2$, i.e.\ $mq\leq N\!-\!1$.

\textit{(a)}: we must verify $q\leq N-1$ for every Gaussian integer
$\lambda=p+qi\notin\mathbb{R}$ with $|\lambda|>1$.
If $p\geq 1$ then $N=p^2+q^2\geq 1+q^2\geq q+1$, so $q\leq N-1$.
If $p=0$ then $|\lambda|>1$ forces $q\geq 2$, and
$N-1=q^2-1\geq q$ since $q\geq 2$.
Equality $q=N\!-\!1$ gives $p^2=1-q(q\!-\!1)$;
for $q=1$ this yields $p=1$, $\lambda=1+i$;
for $q\geq 2$ one has $p^2<0$, which is impossible.

\textit{(b)}: $\kappa_m^*=1$ iff $m^2q^2=(N\!-\!1)^2$,
so $m=(N\!-\!1)/q$.

\textit{(c)}: $q=1$ gives $mq = m \leq N-1 = p^2$.
\end{proof}

\begin{corollary}\label{cor:half}
Up to associates, $\kappa_1^*=1/2$ if and only if
$\lambda\in\{2+i,\;2+3i,\;1+4i\}$ \textup{(}$N=5,13,17$\textup{)}.
More generally, $\kappa_1^* = 1/m$ for an integer $m\geq 1$ only for
$m=1$ \textup{(}$\lambda=1+i$\textup{)} and $m=2$
\textup{(}the three primes above\textup{)}.
\end{corollary}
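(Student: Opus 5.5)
The plan is to turn the condition $\kappa_1^*=1/m$ into a Diophantine equation in $(p,q)$ by means of formula~\eqref{eq:kappa-m}, and then solve that equation.

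\emph{Normalisation.} Write $\lambda=p+qi$. First I would record that $\kappa_1^*$ depends on $\lambda$ only through $N$ and $|\lambda^2-1|$. Both quantities are unchanged by $\lambda\mapsto-\lambda$ and $\lambda\mapsto\bar\lambda$. So I may normalise to $p\ge 0$ and $q\ge 1$, which is the sense in which ``up to associates'' should be read. Note that the symmetry is not multiplication by $i$: that replaces $\lambda^2-1$ by $-(\lambda^2+1)$ and genuinely changes the value. After normalising, I use the identity from the proof of Proposition~\ref{prop:golden}, namely $|\lambda^2-1|^2=(N-1)^2+4q^2$. This follows by expanding $(p^2-q^2-1)^2+4p^2q^2$. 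With $m=1$ in \eqref{eq:kappa-m}, the condition $\kappa_1^*=1/m$ becomes, after squaring,
\[
  m^2\bigl((N-1)^2+4q^2\bigr)=5(N-1)^2,
  \qquad\text{i.e.}\qquad
  4m^2q^2=(5-m^2)(N-1)^2 .
\]

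\emph{Restricting $m$.} The left side is positive because $q\ge1$, and $N-1\ge1$. Hence $5-m^2>0$, which forces $m\in\{1,2\}$. For $m=1$ the equation reads $q=N-1$. By Proposition~\ref{prop:golden}(a) this happens only for $\lambda=1+i$, which is the equality case $\kappa_1^*=1$. For $m=2$ the equation reads $16q^2=(N-1)^2$, i.e.\ $N-1=4q$. Substituting $N=p^2+q^2$ gives $p^2+q^2-4q-1=0$. Completing the square, this is
\[
  p^2+(q-2)^2=5 .
\]

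\emph{Solving.} Next I list the representations of $5$ as a sum of two squares: $(p,q-2)\in\{(\pm1,\pm2),(\pm2,\pm1)\}$. With $p\ge0$ and $q\ge1$ this leaves $(p,q)\in\{(1,4),(2,3),(2,1)\}$, the case $q=0$ being excluded since $\lambda$ is not real. These are $\lambda=1+4i,\,2+3i,\,2+i$, with $N=17,13,5$. I would also check that each of these norms is prime. Finally I verify directly that $\kappa_1^*=1/2$ for each of them; for example $\lambda=2+i$ gives $|\lambda^2-1|^2=20$ and $\kappa_1^*=\sqrt{20}/(4\sqrt5)=1/2$. The converse direction of the ``if and only if'' then holds automatically, because every step of the reduction is an equivalence once $q\ge 1$.

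The main obstacle is bookkeeping rather than mathematics. One must make precise which symmetries preserve $\kappa_1^*$, namely sign and conjugation, so that ``up to associates'' is stated correctly and no solution like $-1+4i$ is double-counted. One must also keep the sign convention $q>0$ consistent with the $mq$ form used in Proposition~\ref{prop:golden}.
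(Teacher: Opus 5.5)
Your proposal is correct and follows essentially the same route as the paper. Both arguments use $|\lambda^2-1|^2=(N-1)^2+4q^2$ to reduce to $4m^2q^2=(5-m^2)(N-1)^2$, rule out $m\ge 3$ by sign, settle $m=1$ via Proposition~\ref{prop:golden}(a), and solve $p^2+(q-2)^2=5$ for $m=2$.

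Your remark that only $\lambda\mapsto-\lambda$ and $\lambda\mapsto\bar\lambda$ preserve $\kappa_1^*$ is correct, and $\lambda\mapsto i\lambda$ does not: for example, $i(2+i)=-1+2i$ has $\kappa_1^*=\sqrt{2/5}$. This sharpens the paper's phrase ``up to associates'', which read literally is false. The paper's proof leaves the point implicit by simply restricting to $p,q\ge 1$.
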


\begin{proof}
Write $\lambda=p+qi$, $N=p^2+q^2$.  Setting $\kappa_1^*=1/m$ in
$|\lambda^2-1|^2 = 5(N-1)^2/m^2$ and using
$|\lambda^2-1|^2 = (N-1)^2+4q^2$ gives
\[
  4q^2 = (N\!-\!1)^2\!\bigl(\tfrac{5}{m^2}-1\bigr).
\]
For $m=1$ this is $4q^2=4(N\!-\!1)^2$, so $q=N\!-\!1=1$, $N=2$:
only $\lambda=1+i$.
For $m=2$: $4q^2 = \frac14(N\!-\!1)^2$, so $N=4q+1$, $p^2=4q+1-q^2$,
and the positivity condition $p^2\geq 1$ becomes
$p^2+(q-2)^2 = 5$, i.e.\ a representation of~$5$ as a sum of two
squares.  The three solutions with $q\geq 1$, $p\geq 1$ are
$(p,q)=(2,1),(2,3),(1,4)$.
For $m\geq 3$: $5/m^2-1 < 0$, so $4q^2<0$ --- impossible.
\end{proof}

\begin{theorem}\label{thm:metallic}
For every integer $m\geq 1$:
\begin{enumerate}[\upshape(a)]
\item There exists a self-similar plane-filling tile
  in~$\mathbb{R}^2$ whose aspect ratio equals
  $1/\mu_m$, the reciprocal of the $m$-th metallic mean.
\item Any such tile over $\mathbb{Z}[i]$ requires at least
  $\lceil\sqrt{m}\,\rceil^2 + 1$ maps.
\item When $m$ is a perfect square, this lower bound is sharp:
  the minimum is $m+1$, achieved by the collinear CRS for
  $\lambda = \sqrt{m}+i$.
\end{enumerate}
\end{theorem}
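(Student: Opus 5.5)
The plan is to treat the three parts separately. Parts (a) and (c) are explicit constructions drawn from Proposition~\ref{prop:family-N} and Proposition~\ref{prop:golden}. Part (b) is a lower bound obtained from the feasibility condition $mq\le N-1$ of Proposition~\ref{prop:golden}, combined with monotonicity in $\kappa$ (Corollary~\ref{cor:kappa}(a)). Throughout, ``tile over $\mathbb{Z}[i]$'' means the setting of this section: $a=1/\lambda$ with $\lambda=p+qi\in\mathbb{Z}[i]\setminus\mathbb{R}$, $s_k=1$, and translations a CRS modulo~$\lambda$. The number of maps is then $N=|\lambda|^2=p^2+q^2$. Using associates and conjugation, I will assume $q\ge1$ and $p\ge0$.

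For (a), take $\ell=2$ and $N=m^2+1$ in Proposition~\ref{prop:family-N}, so $\lambda=1+mi\in\mathbb{Z}[i]$. The digits $\{0,\dots,N-1\}$ are a CRS of $\mathbb{Z}[\lambda]$ modulo $\lambda$ by the argument preceding that proposition, so the centred IFS produces a plane-filling tile. Part (iii) of the proposition gives
\[
\mathrm{AR}=\bigl(\sqrt{m^2+4}-\sqrt{m^2}\bigr)/2=1/\mu_m.
\]
This already settles (a).

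For (c), let $m=p^2$ and $\lambda=p+i$, so $N=p^2+1=m+1$. Since $\gcd(p,1)=1$, we have $\mathbb{Z}[i]/(\lambda)\cong\mathbb{Z}/N\mathbb{Z}$, so $\{0,\dots,N-1\}$ is a CRS and the IFS is a tile. With $q=1$, Proposition~\ref{prop:golden}(b) gives collinear aspect ratio $1/\mu_{(N-1)/q}=1/\mu_{p^2}=1/\mu_m$. Since $\lceil\sqrt m\,\rceil^2+1=m+1$, this matches the bound in (b), so the bound is sharp.

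For (b), the key step is that $\mathrm{AR}=1/\mu_m$ forces the actual anisotropy to equal $\kappa_m^*$. This holds because $\mathrm{AR}$ is strictly decreasing in $\kappa$ by Corollary~\ref{cor:kappa}(a). Since $\kappa\le1$, we need $\kappa_m^*\le1$, i.e.\ $mq\le N-1$ by Proposition~\ref{prop:golden}. Set $c=\lceil\sqrt m\,\rceil$; the goal is $N\ge c^2+1$, and I will split into cases on $q$.
\begin{enumerate}[(i)]
\item If $p=0$, then $N=q^2$ and $mq\le q^2-1$ force $q\ge m+1$. Hence $N\ge(m+1)^2\ge c^2+1$.
\item If $p\ge1$ and $q\ge c$, then $N\ge1+c^2$ directly.
\item If $q=1$, then $m\le p^2$, so $p\ge c$ and $N\ge c^2+1$.
\item If $2\le q\le c-1$, then $N\ge mq+1\ge 2m+1$. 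It remains to check $2m\ge c^2$. Using $c<\sqrt m+1$, this holds once $m\ge 6$. The finitely many $m\le5$ with $2\le q\le c-1$ are then checked by hand; in fact $c\le3$ there, so only $q=2$, $c=3$, $m\in\{5\}$ remains, which gives $N\ge 11\ge10$.
\end{enumerate}

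The main obstacle is making this case analysis in (b) clean and airtight. I would first try to replace it by a single inequality, minimising $p^2+q^2$ subject to $p^2+q^2-1\ge mq$ over integers. If that gets messy, I would fall back on the explicit split above.
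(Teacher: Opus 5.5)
Your proposal is correct and follows essentially the paper's route. Parts (a) and (c) use the same collinear construction; for $\lambda=1+mi$ you read off $\mathrm{AR}=1/\mu_m$ from Proposition~\ref{prop:family-N}(iii) rather than Proposition~\ref{prop:golden}(b), which is the same computation. Part (b) makes the same reduction to $mq\le N-1$ followed by the same $q=1$ versus $q\ge 2$ split.

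Your extra sub-cases (i) and (ii), and the finite check in (iv), are unnecessary. First, $N\ge mq+1$ holds for every $p$, including $p=0$. Second, the integer bound $m\ge (c-1)^2+1$ gives $2m+1\ge c^2+1+(c-2)^2$ for all $m\ge 1$. This is how the paper closes the $q\ge 2$ case in one line.
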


\begin{proof}
\textit{(a):}
Let $\lambda = 1+mi \in \mathbb{Z}[i]$.
Then $N = |\lambda|^2 = m^2+1$ and $\operatorname{Im}(\lambda)=m$.
The standard digits $\{0,1,\ldots,m^2\}$ form a CRS
modulo~$\lambda$ (\S\ref{sec:family-N}); by
Lagarias--Wang~\cite{LagariasWang1996}, any CRS with an
expanding base $\lambda$ yields a self-affine tile of positive
Lebesgue measure whose translates by $\mathbb{Z}[i]$ tile
$\mathbb{R}^2$, and the open set condition
holds~\cite{Bandt1991}.
The collinear digit set has $\kappa = 1$, and
Proposition~\ref{prop:golden}(b) gives
$m = (N\!-\!1)/\operatorname{Im}(\lambda) = m^2/m$,
hence $\mathrm{AR} = 1/\mu_m$.

\textit{(b):}
Let $\lambda=p+qi\in\mathbb{Z}[i]\setminus\mathbb{R}$,
so $p\in\mathbb{Z}$ and $q\geq 1$ \textup{(}WLOG\textup{)}, and set
$k=\lceil\sqrt{m}\,\rceil$, so that
$(k-1)^2<m\leq k^2$.
The condition $mq\leq N-1$ with $N=p^2+q^2$ rearranges to
$p^2\geq mq-q^2+1$, and since $p^2$ is a non-negative integer,
$N = p^2+q^2 \geq \max\bigl(q^2,\,mq+1\bigr)$.
\emph{Case $q=1$}: $p^2\geq m$; since $p$ is an integer,
$p^2\geq k^2$, so $N=p^2+1\geq k^2+1$.
\emph{Case $q\geq 2$}: using $m\geq (k-1)^2+1$,
\[
  mq+1 \;\geq\; 2m+1 \;\geq\; 2\bigl((k-1)^2+1\bigr)+1
        \;=\; 2k^2-4k+5 \;\geq\; k^2+1,
\]
the last step being $(k-2)^2\geq 0$.
Hence $N\geq k^2+1 = \lceil\sqrt{m}\,\rceil^2+1$ in every case.

\textit{(c):}
When $m=k^2$, the choice $\lambda=k+i$ yields $N=k^2+1=m+1$ and
the collinear CRS has $\kappa=1$ with $(N\!-\!1)/q=m$.
\end{proof}

\begin{remark}
By the sum-of-two-squares theorem, the admissible norms $N=p^2+q^2$ with
$q\neq 0$ form the set $\{2,4,5,8,9,10,13,\ldots\}$.
\end{remark}

\begin{remark}
Item~(b) of Proposition~\ref{prop:golden} recovers every row of
Table~\ref{tab:examples} as a special case:
$\lambda=1\!+\!i$ gives $m=1$ and $\mathrm{AR}=1/\mu_1=1/\varphi$;
$\lambda=2\!+\!i$ gives $m=4$ and
$\mathrm{AR}=1/\mu_4=1/(2\!+\!\sqrt{5})=\sqrt{5}-2=1/\varphi^3$;
$\lambda=1\!+\!2i$ gives $m=2$ and $\mathrm{AR}=1/\mu_2=\sqrt{2}-1$.
Thus the collinear aspect ratios of Section~\ref{sec:family} and the
golden-ratio tiles of Table~\ref{tab:kappa} are part of a single
metallic-ratio spectrum parametrised by~$\kappa$.
Table~\ref{tab:kappa} lists CRS digit sets achieving $\kappa_1^*$
(i.e.\ $\mathrm{AR}=1/\varphi$) for small Gaussian primes~$\lambda$.

\begin{table}[ht]
\centering\small
\caption{Complete residue systems $D\subset\mathbb{Z}[i]$
(modulo~$\lambda$, centred to $\sum t_k=0$)
achieving $\mathrm{AR}=1/\varphi$.}
\label{tab:kappa}
\begin{tabular}{cccl}
$\lambda$ & $N$ & $\kappa^*$ & digit set $D$\\[2pt]\hline\\[-8pt]
$1+i$  & $2$  & $1$              & $\{0,\,1\}$ \\
$2+i$  & $5$  & $1/2$            & $\{0,\,1,\,-i,\,i,\,1\!+\!i\}$ \\
$1+2i$ & $5$  & $\sqrt{2/5}$     & $\{-1\!-\!2i,\,-1,\,i,\,1,\,1\!+\!i\}$ \\
$1+3i$ & $10$ & $\sqrt{13/45}$   & $\{-2,\,-2\!+\!2i,\,-1\!-\!i,\,-1,\,-1\!+\!i,\,-i,\,0,\,i,\,1\!-\!2i,\,1\}$ \\
$2+3i$ & $13$ & $1/2$            & $\{\!-\!2\!-\!3i,\,-2,\,-1\!\pm\!i,\,-1,\,\pm 2i,\,\pm i,\,1,\,1\!+\!i,\,2,\,3\!+\!2i\}$ \\
\end{tabular}
\end{table}

\noindent
Rows 3 and~4 have irrational $\kappa^*$ ($\sqrt{2/5}$ and $\sqrt{13/45}$),
yet Gaussian-integer digits realise these values exactly.
Corollary~\ref{cor:half} explains why rows 2 and~5 share
$\kappa^*\!=\!1/2$: only three Gaussian primes ($2\!+\!i$, $2\!+\!3i$, $1\!+\!4i$)
attain this value, and the proof reduces to the unique representation
$5=1^2+2^2$.
A CRS for the remaining prime $\lambda=1\!+\!4i$ ($N\!=\!17$)
achieving $\kappa\!=\!1/2$ also exists;
its $17$ digits are omitted from the table for space.
\end{remark}

\begin{figure}[ht]
\centering
\includegraphics[width=6cm]{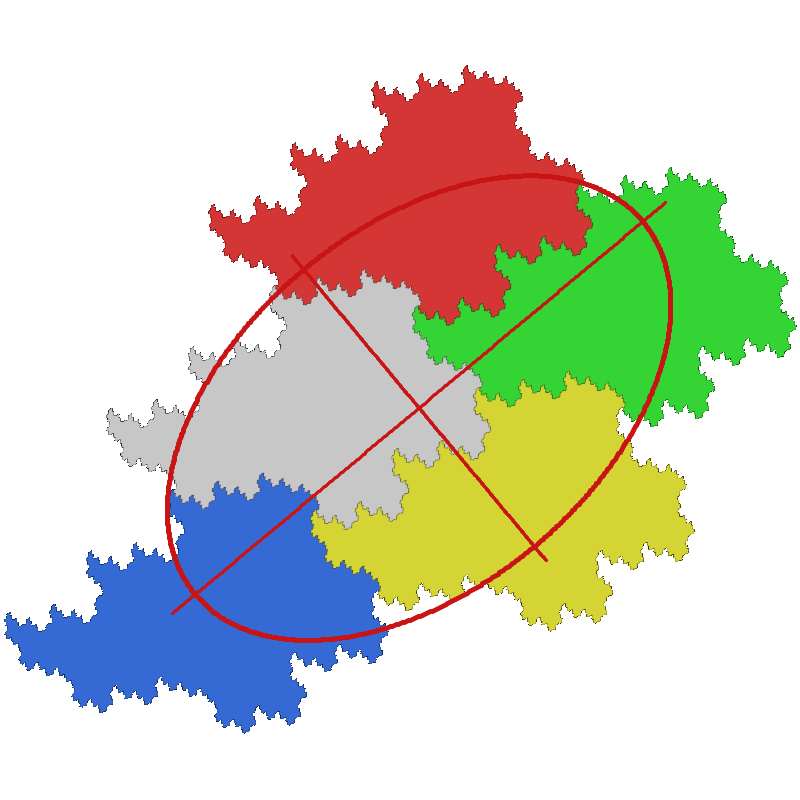}
\caption{The $5$-map tile with $\lambda=2+i$ and digit set
$\{0,1,-i,i,1\!+\!i\}$ ($\kappa=1/2$).
Its $1.5$-sigma principal-axis ellipse (red) confirms
$\mathrm{AR}=1/\varphi\approx 0.618$,
the same value as the Twin Dragon (Figure~\ref{fig:ellipses}a).}
\label{fig:kappa-tile}
\end{figure}

\begin{lemma}\label{lem:quadratic}
For $\lambda\in\mathbb{C}\setminus\mathbb{R}$ with $|\lambda|^2=N\geq 2$,
\begin{equation}\label{eq:imag-identity}
  |\lambda^2-1|^2 = (N-1)^2 + 4\,(\operatorname{Im}\lambda)^2.
\end{equation}
Consequently, the digit anisotropy~\eqref{eq:kappa-m} satisfies
$\kappa_m^*\leq 1$ if and only if
$m\leq (N\!-\!1)/|\operatorname{Im}\lambda|$.
The collinear case $\kappa=1$ corresponds to
$m=(N\!-\!1)/|\operatorname{Im}\lambda|$, which is a positive integer
only when $|\operatorname{Im}\lambda|$ divides $N\!-\!1$.
\end{lemma}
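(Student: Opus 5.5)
The plan is to prove the identity \eqref{eq:imag-identity} by direct expansion in real coordinates. The two consequences then follow from the algebra already carried out in the proof of Proposition~\ref{prop:golden}, with $q$ replaced by $|\operatorname{Im}\lambda|$.

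\textbf{The identity.} Write $\lambda=p+qi$ with $p,q\in\mathbb{R}$, $q\neq 0$, and $p^2+q^2=N$. Then
$\lambda^2-1=(p^2-q^2-1)+2pq\,i$, so
\[
  |\lambda^2-1|^2=(p^2-q^2-1)^2+4p^2q^2 .
\]
Substituting $p^2=N-q^2$ turns the first term into $(N-1-2q^2)^2$. Expanding gives
\[
  (N-1)^2-4q^2(N-1)+4q^4+4q^2(N-q^2)=(N-1)^2+4q^2 .
\]
As a cleaner alternative, one can use $|\lambda^2-1|^2=(\lambda^2-1)(\bar\lambda^2-1)=N^2-(\lambda^2+\bar\lambda^2)+1$. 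Then $\lambda^2+\bar\lambda^2=2\operatorname{Re}(\lambda^2)=2(p^2-q^2)=2N-4q^2$, which gives the identity in one line. I would present this version. It also shows that integrality of $\lambda$ plays no role, so the hypothesis $N\ge 2$ matters only to make $N-1>0$ below.

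\textbf{The criterion $\kappa_m^*\le 1$.} Formula \eqref{eq:kappa-m} was derived from \eqref{eq:family} using only $|a|^2=1/N$ and $|1-a^2|=|\lambda^2-1|/N$. These hold for any complex $\lambda$ with $|\lambda|^2=N$, so the formula is valid in this generality. Since every quantity is positive (here $N-1>0$), $\kappa_m^*\le1$ is equivalent to $m^2|\lambda^2-1|^2\le(m^2+4)(N-1)^2$. Inserting the identity and cancelling $m^2(N-1)^2$ leaves $4m^2q^2\le4(N-1)^2$. Taking square roots (both sides nonnegative, $m>0$) gives $m|q|\le N-1$, that is, $m\le (N-1)/|\operatorname{Im}\lambda|$.

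\textbf{The collinear case.} The same chain with equality shows that $\kappa_m^*=1$ holds iff $m=(N-1)/|\operatorname{Im}\lambda|$. For this to be a positive integer, $|\operatorname{Im}\lambda|$ must divide $N-1$. That statement only makes sense when $\operatorname{Im}\lambda$ is an integer, for instance in $\mathbb{Z}[i]$; in general I would phrase it as "$(N-1)/|\operatorname{Im}\lambda|\in\mathbb{Z}_{>0}$". Positivity is automatic from $N\ge2$.

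There is no real obstacle here. The only points needing care are to confirm that \eqref{eq:kappa-m} does not depend on integrality of $\lambda$, and to interpret the divisibility statement correctly when $\operatorname{Im}\lambda\notin\mathbb{Z}$.
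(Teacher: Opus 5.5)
Your proof is correct and follows the paper's argument: you expand $|\lambda^2-1|^2$ in real coordinates to get $(N-1)^2+4(\operatorname{Im}\lambda)^2$, then insert this into the squared form $m^2|\lambda^2-1|^2\le(m^2+4)(N-1)^2$ of $\kappa_m^*\le 1$ and cancel $m^2(N-1)^2$. Your conjugate-product shortcut is a minor variant, and your remarks are harmless clarifications of points the paper leaves implicit: \eqref{eq:kappa-m} needs only $|\lambda|^2=N$, and the divisibility clause should be read as $(N-1)/|\operatorname{Im}\lambda|\in\mathbb{Z}_{>0}$.
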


\begin{proof}
Write $\lambda=x+iy$,  so $N=x^2+y^2$ and
$\lambda^2-1=(x^2\!-\!y^2\!-\!1)+2ixy$.  Then
\[
  |\lambda^2-1|^2
  = (x^2\!-\!y^2\!-\!1)^2 + 4x^2y^2
  = (x^2\!+\!y^2\!-\!1)^2 + 4y^2
  = (N\!-\!1)^2 + 4y^2,
\]
since, by direct expansion,
$(x^2\!-\!y^2\!-\!1)^2+4x^2y^2
  - (x^2\!+\!y^2\!-\!1)^2 = 4y^2$.
Substituting into~\eqref{eq:kappa-m}:
$\kappa_m^*\leq 1$
iff $m^2\bigl((N\!-\!1)^2+4y^2\bigr)\leq (m^2\!+\!4)(N\!-\!1)^2$,
i.e.\ $4m^2y^2\leq 4(N\!-\!1)^2$, so $m\leq (N\!-\!1)/|y|$.
\end{proof}

\begin{corollary}\label{cor:unique-Zi}
Among all rings of integers of imaginary quadratic fields, $\mathbb{Z}[i]$
is the unique one where collinear self-similar tiles can have
metallic-ratio aspect ratios.
\end{corollary}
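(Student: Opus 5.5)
The plan is to characterise exactly when a collinear tile ($\kappa=1$) over $\mathcal{O}_K$ has $\mathrm{AR}=1/\mu_m$, and then show this forces $K=\mathbb{Q}(i)$.

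Fix an imaginary quadratic field $K=\mathbb{Q}(\sqrt{-D})$ with $D$ square-free, and take $\lambda\in\mathcal{O}_K\setminus\mathbb{R}$ with $N=\mathbf{N}(\lambda)\geq 2$. For $\kappa=1$, Corollary~\ref{cor:kappa} shows that $\mathrm{AR}=1/\mu_m$ holds exactly when $\rho=(1-|a|^2)/|1-a^2|$ equals the ellipticity of $1/\mu_m$. That ellipticity is $m/\sqrt{m^2+4}$, since $(1-\rho)/(1+\rho)=\mu_m^{-2}$. This is precisely the condition $\kappa_m^*=1$ in~\eqref{eq:kappa-m}. Nothing in that computation used $\lambda\in\mathbb{Z}[i]$; it used only $|a|^2=1/N$ and $|1-a^2|=|\lambda^2-1|/N$. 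So by Lemma~\ref{lem:quadratic}, which holds for any non-real complex $\lambda$, collinear metallic aspect ratios occur exactly when
\[
  m\,|\operatorname{Im}\lambda| = N-1
\]
for some positive integer $m$.

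Next I will make $\operatorname{Im}\lambda$ explicit. Every element of $\mathcal{O}_K$ can be written $\lambda=(x+y\sqrt{-D})/2$ with $x,y\in\mathbb{Z}$; here $x\equiv y\pmod 2$ when $D\equiv 3\pmod 4$, and $x,y$ are even otherwise. Hence $|\operatorname{Im}\lambda|=|y|\sqrt{D}/2$, and the condition becomes
\[
  m|y|\sqrt{D}=2(N-1).
\]
Since $N-1\geq 1$ is a nonzero integer and $y\neq 0$, this forces $\sqrt{D}\in\mathbb{Q}$, so $D=1$ and $K=\mathbb{Q}(i)$. This is the key step: for $D>1$ the ratio $(N-1)/|\operatorname{Im}\lambda|$ is a nonzero rational multiple of $1/\sqrt{D}$, which is irrational and so cannot equal the integer $m$. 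It follows that no non-Gaussian ring admits a collinear tile with a metallic aspect ratio.

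For existence in $\mathbb{Z}[i]$ I will cite Theorem~\ref{thm:metallic}(a). For instance, $\lambda=1+i$ gives the Twin Dragon with $\mathrm{AR}=1/\varphi$.

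I expect the main obstacle to be stating precisely what ``collinear tile'' covers. One point is whether collinear digits must have $\kappa=1$ as in Corollary~\ref{cor:field}; they do, by the remark after the definition of $\kappa$. A second point is whether the aspect ratio is unaffected by rescaling or rotating the digits; it is, because Corollary~\ref{cor:kappa} depends only on $a$ and $\kappa$. A third point is the degenerate case $\lambda\in\mathbb{R}$, where $\mathrm{AR}=0$ by Corollary~\ref{cor:kappa}(b), so no metallic ratio arises and the case can be excluded. I will state these reductions briefly before the irrationality argument.
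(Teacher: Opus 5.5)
Your proposal is correct and follows essentially the same route as the paper: Lemma~\ref{lem:quadratic} reduces the collinear metallic case to $m=(N-1)/|\operatorname{Im}\lambda|$ being a positive integer, $\operatorname{Im}\lambda$ is a nonzero rational multiple of $\sqrt{D}$ so this forces $D=1$, and existence in $\mathbb{Z}[i]$ comes from the explicit construction. Your parametrisation $\lambda=(x+y\sqrt{-D})/2$ and your observation that \eqref{eq:kappa-m} never uses $\lambda\in\mathbb{Z}[i]$ usefully make explicit what the paper leaves implicit, since Proposition~\ref{prop:golden} is stated only for Gaussian integers.
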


\begin{proof}
If $K\neq\mathbb{Q}(i)$, write $\mathcal{O}_K=\mathbb{Z}+\mathbb{Z}\omega$;
then $\operatorname{Im}\omega$ is either $\sqrt{D}$ or $\sqrt{D}/2$
for a square-free integer $D>1$.  Thus every non-real
$\lambda=a+b\omega\in\mathcal{O}_K$ has
$\operatorname{Im}\lambda=b\operatorname{Im}\omega\notin\mathbb{Q}$.
By Lemma~\ref{lem:quadratic}, the collinear metallic case would require
$m=(N\!-\!1)/|\operatorname{Im}\lambda|$ to be a positive integer,
which is impossible.  In $\mathbb{Z}[i]$, however,
$\operatorname{Im}\lambda = q\in\mathbb{Z}$,
and taking $q=1$ gives $m=(N\!-\!1)/1 = p^2$, always a positive integer.
\end{proof}

\section{Further directions}

An explicit closed-form construction of a CRS
achieving $\kappa=\kappa_m^*$ for all~$N$ would be desirable;
this amounts to a system of quadratic Diophantine equations
in the digit shifts $c_k\in\mathbb{Z}[i]$
(where $d_k = k + c_k\lambda$), which does not appear to simplify
into a universal formula.
More broadly, for a fixed expansion factor~$\lambda$,
characterising the set of all achievable anisotropy values~$\kappa$
--- and in particular the CRS that minimise or maximise~$\kappa$
--- is a natural combinatorial-geometric problem.

The equal-contraction assumption ($a_k \equiv a$) is essential to our
closed-form formulas.  For an IFS $\{a_k z + t_k\}$ with distinct
contraction ratios the fixed-point equation for the second moment
remains linear and is solvable in closed form for small~$N$;
extending the aspect-ratio classification to this setting is a natural
next step.
In a different direction, higher-order moments ($E[Z^3]$, $E[Z^4]$, \ldots)
satisfy analogous fixed-point equations and yield shape descriptors
beyond the aspect ratio (skewness, kurtosis);
their systematic study for self-similar measures remains open.

\end{document}